\newtheoremstyle{monThm}
{0cm} 
{0cm} 
{\itshape} 
{} 
{\scshape} 
{.} 
{0cm} 
{} 
\theoremstyle{monThm}
\newtheorem{Thm}{Th\'eor\`eme}
\newtheorem{CoroFonda}[Thm]{Corollaire}
\theoremstyle{theorem}
\newtheorem*{CiteThm}{Th\'eor\`eme}
\newtheorem*{CiteCoro}{Corollaire}
\newtheorem{proposition}{Proposition}[section]
\newtheorem{lemme}[proposition]{Lemme}
\newtheorem{corollaire}[proposition]{Corollaire}
\newtheorem*{conjecture}{Conjecture}
\theoremstyle{definition}
\newtheorem{definition}[proposition]{Définition}
\theoremstyle{remark}
\newtheorem*{rmq}{Remarque}
\newcommand{\R}{\mathbb{R}}
\newcommand{\C}{\mathbb{C}}
\renewcommand{\H}{\mathbb{H}}
\newcommand{\p}{\mathfrak{p}}
\renewcommand{\sl}{\mathfrak{sl}}
\newcommand{\Vect}{\mathrm{Vect}}
\newcommand{\Isom}{\mathrm{Isom}}
\newcommand{\Aut}{\mathrm{Aut}}
\newcommand{\Id}{\mathrm{Id}}
\newcommand{\Tr}{\mathrm{Tr}}
\newcommand{\Ad}{\mathrm{Ad}}
\newcommand{\ad}{\mathrm{ad}}
\newcommand{\norm}[1]{\left \Vert #1 \right \Vert}
\renewcommand{\d}{\mathrm{d}}
\newcommand{\gammapoint}{\stackrel{\cdot}{\gamma} \!}
\newcommand{\dev}{\mathrm{dev}}
\renewcommand{\epsilon}{\varepsilon}
\renewcommand{\l}{\mathfrak{l}}
\renewcommand{\k}{\mathfrak{k}}
\newcommand{\m}{\mathfrak{m}}
\newcommand{\n}{\mathfrak{n}}
\newcommand{\com}{\mathfrak{com}}
\newcommand{\1}{\mathbf{1}}
\newcommand{\Aff}{\textrm{Aff}}
\newcommand{\Diff}{\textrm{Diff}}
\newcommand{\Sym}{\textrm{Sym}}
\newcommand{\T}{\mathrm{T}}
\newcommand{\PSL}{\mathrm{PSL}}
\newcommand{\PGL}{\mathrm{PGL}}
\renewcommand{\O}{\mathrm{O}}
\newcommand{\U}{\mathrm{U}}
\newcommand{\PSO}{\mathrm{PSO}}
\newcommand{\SL}{\mathrm{SL}}
\newcommand{\SO}{\mathrm{SO}}
\newcommand{\SU}{\mathrm{SU}}
\newcommand{\CP}{\mathbb{C}\mathrm{P}}
\renewcommand{\phi}{\varphi}
\theoremstyle{definition}
\newtheorem*{definition*}{Définition}
\title[Complétude des variétés localement symétriques]{Sur la complétude de certaines variétés pseudo-riemanniennes localement symétriques}
\author{Nicolas Tholozan}
\address{Universit\'e de Nice-Sophia Antipolis, Laboratoire J.-A. Dieudonn\'e, UMR 7351 CNRS, Parc Valrose, 06108 Nice Cedex 2, France} 
\email{tholozan@unice.fr}
\begin{document}

\begin{abstract} 
Nous prouvons que certains espaces pseudo-riemanniens symétriques n'admettent pas d'ouvert strict divisible par l'action d'un groupe discret d'isométries. Autrement dit, si une variété pseudo-riemannienne compacte est localement isométrique à un tel espace, et si son application développante est injective, alors la variété est géodésiquement complète, et donc isométrique à un quotient de l'espace modèle tout entier. Ces résultats étendent, sous une hypothèse supplémentaire (l'injectivité de l'application développante), les théorèmes de Carrière et Klingler selon lesquels les variétés lorentziennes compactes de courbure constante sont géodésiquement complètes.
\end{abstract}

\begin{altabstract}
We prove that some symetric semi-riemannian manifolds do not admit a proper domain which is divisible by the action of a discrete group of isometries. In other words, if a closed semi-riemannian manifold is locally isometric to such a model, and if its developing map is injective, then the manifold is actually geodesically complete, and therefore isometric to a quotient of the whole model space. Those results extend, under additional hypothesis (the injectivity of the developing map), the theorems of Carrière and Klingler stating that closed lorentz manifolds of constant curvature are geodesically complete.
\end{altabstract}

\maketitle

\subsection*{Introduction}

Soit $X$ une variété lisse munie d'une action transitive d'un groupe de Lie $G$ de dimension finie. Une variété $M$ est dite \emph{localement modelée} sur $X$ lorsqu'elle est munie d'un atlas de cartes à valeurs dans $X$ dont les changements de cartes sont des restrictions de transformations de $G$. On dit aussi que $M$ est munie d'une $(G,X)$-structure, ou encore que $M$ est une $(G,X)$-variété, selon la terminologie de Thurston (voir section \ref{Rappel}). Certaines $(G,X)$-variétés apparaissent naturellement lorsque $M$ possède une structure géométrique rigide localement homogène. Par exemple, une variété munie d'une métrique riemannienne plate est localement modelée sur l'espace euclidien. Il existe de même une notion de métrique lorentzienne de courbure nulle (voir par exemple \cite{Wolf74}, p.63), et les variétés munies de telles métriques sont localement modelées sur l'espace de Minkovski $\R^{n-1,1}$, c'est-à-dire $\R^n$ munie d'une métrique de signature $(n-1,1)$ invariante par translation.

Certaines $(G,X)$-structures s'obtiennent en quotientant un ouvert $U$ du modèle $X$ par un sous-groupe discret de $G$ agissant librement et proprement discontinûment sur $U$. De telles structures sont dites \emph{kleiniennes} (voir section \ref{Kleinienne}). Dans le cas particulier où le domaine $U$ est le modèle $X$ tout entier, la $(G,X)$-structure est dite \emph{complète}. Lorsque $G$ préserve une métrique riemannienne sur $X$, il découle du théorème de Hopf-Rinow que toutes les $(G,X)$-variétés compactes sont complètes (section \ref{Geodesique}). Mais ce résultat n'est plus vrai pour d'autres espaces homogènes non riemanniens. Ainsi, en conséquence du théorème d'uniformisation de Poincaré--Koebe, toute surface de Riemann peut être munie d'une structure projective complexe (i.e. une $(\PSL(2,\C), \CP^1)$-structure) kleinienne compatible avec sa structure complexe. Toutefois, seule la sphère de Riemann possède une structure complète.

La question de savoir sous quelles hypothèses les $(G,X)$-variétés compactes sont complètes est une question ouverte. En particulier, dès que $G$ préserve une forme volume sur $X$, on ne connaît aucun exemple de $(G,X)$-variété compacte non complète. La conjecture la plus célèbre dans ce domaine est la conjecture de Markus, selon laquelle toute variété affine compacte possédant une forme volume parallèle est complète (voir section \ref{Markus}).

La conjecture de Markus a été prouvée par Carrière dans le cas particulier où la variété affine possède une forme quadratique de signature $(n-1,1)$ parallèle; autrement dit, pour les variétés lorentziennes plates (\cite{Carriere89}). Nous l'étendons ici aux variétés lorentz-hermitiennes plates, avec toutefois une hypothèse supplémentaire.\\

\begin{Thm} \label{ThmAffine}
Toutes les $(\U(n-1,1) \ltimes \C^n, \C^n)$-variétés kleiniennes compactes sont complètes.\\
\end{Thm}

(Le groupe $\U(n-1,1)\ltimes \C^n$ désigne le groupe des transformations affines complexes de $\C^n$ dont la partie linéaire préserve une forme sesquilinéaire de signature réelle $(2n-2,2)$.)\\

Le théorème de Carrière a été généralisé dans une autre direction par Klingler dans \cite{Klingler96}. Ce dernier prouve que les variétés lorentziennes compactes de courbure constante sont complètes. Pour ces variétés, la complétude de la $(G,X)$-structure est équivalente à la complétude du flot géodésique de la métrique (voir section \ref{Geodesique}). On peut plus généralement conjecturer que toutes les variétés pseudo-riemanniennes compactes de courbure constante, et même que toutes les variétés pseudo-riemanniennes compactes localement symétriques (au sens de Cartan, voir section \ref{Geodesique} ou \cite{Wolf74}, p.57) sont géodésiquement complètes.

Dans \cite{DumitrescuZeghib09}, Dumitrescu et Zeghib s'intéressent aux variétés complexes compactes de dimension 3 munies de métriques riemanniennes holomorphes (i.e. des sections holomorphes partout non dégénérées du fibré des formes quadratiques complexes sur le fibré tangent). Ils prouvent notamment que de telles variétés possèdent toujours une métrique riemannienne holomorphe de courbure constante, ramenant la classification de ces variétés à l'étude de deux types de structures localement homogènes. Les premières (celles de courbure nulle)  sont localement modelées sur $\C^3$, muni de l'action affine complexe du groupe $\SO(3,\C) \ltimes \C^3$. Quant aux secondes (celles de courbure constante non nulle), elles sont localement modelées sur $\SL(2,\C)$ muni de l'action de $\SL(2,\C) \times \SL(2,\C)$ par translations à gauche et à droite. Il n'existe malheureusement pas d'analogue holomorphe des théorèmes de Carrière et Klingler, ce qui constitue le principal obstacle à une classification topologique des 3-variétés complexes compactes possédant des métriques riemanniennes holomorphes. Nous prouvons toutefois ici deux résultats qui étendent partiellement les théorèmes de Carrière et Klingler au contexte holomorphe.\\

\begin{Thm} \label{ThmHoloPlat}
Toutes les $(\SO(3,\C) \ltimes \C^3, \C^3)$-variétés kleiniennes compactes sont complètes.\\
\end{Thm}

(Le groupe $\SO(3,\C) \ltimes \C^3$ désigne le groupe des transformations affines complexes de $\C^3$ dont la partie linéaire préserve une forme $\C$-bilinéaire symétrique non dégénérée.)\\

\begin{Thm} \label{ThmGrpLie}
Soit $L$ un groupe de Lie semi-simple de rang $1$. Considérons $L \times L$ agissant sur $L$ par translations à gauche et à droite. Alors toutes les $(L\times L, L)$-structures kleiniennes compactes sont complètes.\\
\end{Thm}
Le théorème \ref{ThmGrpLie} s'applique en particulier à $\SL(2,\C)$. Dans sa version plus générale, il fait pendant à des résultats récents de Kassel (\cite{Kassel08}) et Guéritaud--Guichard--Kassel--Wienhard (\cite{GueritaudKassel}, \cite{GGKW}) qui décrivent les quotients compacts d'un groupe de Lie $L$ par un sous-groupe discret de $L\times L$ (voir section \ref{Kassel}). En particulier, les auteurs prouvent dans \cite{GGKW} que, dans l'espace des $(L\times L, L)$-structures sur une variété $M$, le domaine des structures complètes forme un ouvert\footnote{La topologie de l'espace des $(G,X)$-structures sur une variété est présentée sommairement à la section \ref{GXDeformation}. On consultera par exemple \cite{Goldman10} pour plus de détails.}. Or, notre théorème a pour conséquence que ce domaine est aussi fermé (corollaire \ref{Fermeture}). On en déduit le corollaire suivant:\\

\begin{CoroFonda} \label{CoroConnexe}
Soit $L$ un groupe de Lie semi-simple de rang $1$, et $M$ une variété compacte de même dimension que $L$. Alors, dans l'espace des $(L\times L, L)$-structures sur $M$, le domaine des structures complètes forme une union de composantes connexes.\\
\end{CoroFonda}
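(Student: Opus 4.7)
La strat\'egie est de montrer que, dans l'espace des $(L\times L, L)$-structures sur $M$, le domaine des structures compl\`etes est \`a la fois ouvert et ferm\'e; il sera alors r\'eunion de composantes connexes. L'ouverture ayant d\'ej\`a \'et\'e \'etablie par Gu\'eritaud, Guichard, Kassel et Wienhard dans \cite{GGKW}, toute la difficult\'e r\'eside dans la fermeture, qui fait l'objet du corollaire \ref{Fermeture} auquel le texte fait allusion.

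Pour obtenir cette fermeture, on consid\`ere une suite $(\sigma_n)$ de $(L\times L, L)$-structures compl\`etes sur $M$ convergeant vers une structure $\sigma$, et l'on cherche \`a d\'emontrer que $\sigma$ est elle aussi compl\`ete. Le th\'eor\`eme \ref{ThmGrpLie} affirmant que toute $(L\times L, L)$-structure kleinienne compacte est compl\`ete, il suffit de montrer que la structure limite $\sigma$ demeure kleinienne, c'est-\`a-dire que son application d\'eveloppante est injective.

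Apr\`es normalisation des applications d\'eveloppantes $\dev_n : \tilde M \to L$, qui convergent alors $C^\infty$ sur les compacts de $\tilde M$ vers l'application d\'eveloppante $\dev$ de $\sigma$, on remarque que chaque $\dev_n$ est un rev\^etement (c'est m\^eme un diff\'eomorphisme lorsque $L$ est simplement connexe, puisque $M$ est compact). L'enjeu principal, et c'est l\`a l'obstacle majeur que j'anticipe, sera de d\'emontrer que l'injectivit\'e se pr\'eserve dans la limite. Il faudra pour cela exploiter de mani\`ere essentielle la compacit\'e de $M$, l'\'equivariance des $\dev_n$ relativement aux holonomies $\rho_n : \pi_1(M) \to L\times L$, ainsi que la propret\'e et la cocompacit\'e des actions des sous-groupes discrets $\rho_n(\pi_1(M))$ sur $L$, afin d'exclure tout \emph{repliement} de l'application d\'eveloppante limite.

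Une fois cette injectivit\'e acquise, $\sigma$ est kleinienne et le th\'eor\`eme \ref{ThmGrpLie} fournit imm\'ediatement sa compl\'etude, ce qui ach\`eve la preuve de la fermeture. Combin\'ee \`a l'ouverture issue de \cite{GGKW}, elle entra\^\i ne que le domaine des structures compl\`etes est \`a la fois ouvert et ferm\'e, donc r\'eunion de composantes connexes, d'o\`u le corollaire.
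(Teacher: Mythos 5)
Votre strat\'egie est exactement celle du texte : le domaine des structures compl\`etes est ouvert d'apr\`es \cite{GGKW}, et il s'agit de montrer qu'il est ferm\'e en prouvant qu'une limite de structures compl\`etes reste kleinienne, puis d'appliquer le th\'eor\`eme \ref{ThmGrpLie}. Mais vous laissez pr\'ecis\'ement en suspens l'unique point non trivial de la fermeture, \`a savoir que l'injectivit\'e de l'application d\'eveloppante passe \`a la limite : vous l'annoncez comme ``l'obstacle majeur'' et vous vous contentez d'indiquer des outils (compacit\'e de $M$, \'equivariance, propret\'e et cocompacit\'e des actions) sans donner l'argument. En l'\'etat, la fermeture n'est donc pas d\'emontr\'ee, et c'est une lacune r\'eelle puisque tout le contenu du corollaire \ref{Fermeture} r\'eside dans ce pas.

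Ce pas est en r\'ealit\'e \'el\'ementaire et purement local ; aucune des hypoth\`eses globales que vous invoquez n'est n\'ecessaire. Si $\dev(x_1)=\dev(x_2)=y$ avec $x_1\neq x_2$, le fait que $\dev$ soit un diff\'eomorphisme local fournit des voisinages relativement compacts disjoints $U_1$ et $U_2$ de $x_1$ et $x_2$ tels que $\dev(U_1)=\dev(U_2)=V$, voisinage de $y$. La convergence sur les compacts de repr\'esentants bien choisis des classes dans $\mathrm{Def}_{(G,L)}(M)$ (convergence $C^1$, et non n\'ecessairement $C^\infty$ comme vous l'\'ecrivez, mais cela suffit) implique que, pour $n$ assez grand, $\dev_n(U_1)$ et $\dev_n(U_2)$ contiennent chacun un voisinage fixe de $y$, donc s'intersectent alors que $U_1\cap U_2=\emptyset$, ce qui contredit l'injectivit\'e de $\dev_n$. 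C'est exactement l'argument du corollaire \ref{Fermeture} ; une fois ce point acquis, le reste de votre raisonnement (kleinienne $\Rightarrow$ compl\`ete par le th\'eor\`eme \ref{ThmGrpLie}, puis ouvert et ferm\'e $\Rightarrow$ r\'eunion de composantes connexes) est correct et identique \`a celui du texte.
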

Autrement dit, il est impossible de déformer continûment une structure complète en une structure incomplète. C'est cette question qui a initialement motivé nos travaux. Toutefois, nous ne savons pas s'il existe des variétés compactes possédant des composantes connexes de structures incomplètes. Il pourrait par exemple exister des variétés compactes possédant des $(L\times L, L)$-structures, mais ne possédant aucune structure complète.\\

La preuve des théorèmes \ref{ThmAffine} et \ref{ThmHoloPlat} s'inspire des idées du théorème de Carrière. En supposant qu'il existe un ouvert strict $U$ de $\C^n$ muni d'une action proprement discontinue d'un sous-groupe $\Gamma$ de $\U(n-1,1)\ltimes \C^n$ (ou $\SO(3,\C) \ltimes \C^3)$, on remarque que le bord de $U$ doit contenir des ``ellipsoïdes dégénérés'' sous l'action de $\Gamma$. La principale différence avec le théorème de Carrière est que cette action est de discompacité $2$: les ellisoïdes dégénèrent non pas sur des hyperplans réels mais sur des hyperplans complexes, de codimension réelle $2$. La preuve du théorème \ref{ThmGrpLie} repose sur un raisonnement analogue, mais sa transposition à la dynamique de l'action de $L\times L$ sur $L$ nécessite des résultats classiques sur la structure des groupes de Lie de rang $1$.

Nos théorèmes ne s'affranchissent pas de l'hypothèse que la $(G,X)$-structure considérée est a priori kleinienne, et la question plus générale de la complétude des $(G,X)$-structures compactes reste ouverte. Nous espérons toutefois esquisser à travers nos preuves une approche systématique du problème.\\

L'article s'organise comme suit. Dans la section \ref{Rappel}, nous rappelons la définition de la notion de $(G,X)$-structure, nous décrivons l'espace de déformation des $(G,X)$-structures sur une variété compacte à partir du principe d'Ehresman-Thurston, puis nous précisons le rapport entre la notion de complétude et celle de complétude géodésique pour des variétés pseudo-riemanniennes; enfin, nous donnons la définition d'une structure kleinienne et son rapport avec la notion de domaine divisible.

Dans la section \ref{Affine}, nous nous focalisons sur la géométrie affine. Nous rappelons la conjecture de Markus et les différents résultats qui la concernent, et nous prouvons les théorèmes \ref{ThmAffine} et \ref{ThmHoloPlat}.

Dans la section \ref{GroupeLie}, nous présentons la géométrie des groupes de Lie semi-simples de rang $1$, qui, munis de leur métrique de Killing, sont des espaces pseudo-riemanniens symétriques. Nous recensons les divers résultats qui décrivent les quotients compacts de $L$ par des sous-groupes de $L\times L$. Nous rappelons ensuite quelques propriétés des groupes de Lie de rang $1$, qui nous permettent d'adapter la preuve du théorème \ref{ThmAffine} et de prouver le théorème \ref{ThmGrpLie}.

\subsubsection*{Remerciements}
L'auteur tient à remercier son directeur de thèse, Sorin Dumitrescu, ainsi que Fanny Kassel, Yves Benoist et Charles Frances pour les discussions fructueuses sans lesquelles ce travail n'aurait pas vu le jour.

\subsection*{Notions préliminaires: actions propres, libres et cocompactes}

Nous rappelons ici quelques définitions classiques concernant les actions de groupes, et les adaptons au cadre qui nous intéresse.\\

Soit $X$ une variété différentielle munie d'une action lisse et transitive d'un groupe de Lie $G$ de dimension finie. Quitte à quotienter $G$ par le noyau du morphisme de $G$ dans $\Diff (X)$ donné par l'action, on pourra supposer que cette action est \emph{fidèle}, c'est-à-dire qu'aucun élément non trivial de $G$ n'agit trivialement sur $X$. 

Considérons maintenant un ouvert $U$ de $X$ et un sous-groupe $\Gamma$ de $G$ dont l'action sur $X$ préserve $U$. L'action de $\Gamma$ sur $U$ est dite \emph{propre} si pour tout compact $K$ de $U$, l'ensemble des $g \in \Gamma$ tels que $g\cdot K$ intersecte $K$ est relativement compact dans $G$.

\begin{proposition}
L'action de $\Gamma$ sur $U$ est propre si et seulement si, pour toute suite $x_n$ dans $U$ convergeant vers $x\in U$ et toute suite $\gamma_n$ dans $\Gamma$, on a l'implication suivante:
$$\gamma_n \cdot x_n \textrm{ bornée dans $U$} \Rightarrow \gamma_n \textrm{ bornée dans $G$}.$$
\end{proposition}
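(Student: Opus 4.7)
Le plan est de d\'emontrer s\'epar\'ement les deux implications, chacune reposant simplement sur la caract\'erisation s\'equentielle de la compacit\'e.

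Pour le sens direct, je supposerais l'action propre et me donnerais des suites $(x_n)$ dans $U$ convergeant vers $x\in U$ et $(\gamma_n)$ dans $\Gamma$ telles que $(\gamma_n \cdot x_n)$ soit born\'ee dans $U$, c'est-\`a-dire contenue dans un compact $K_2 \subset U$. Je poserais alors $K = \{x_n : n\in \N\} \cup \{x\} \cup K_2$, qui est un compact de $U$ contenant \`a la fois chaque $x_n$ et chaque $\gamma_n \cdot x_n$, de sorte que $\gamma_n \cdot K \cap K \neq \emptyset$ pour tout $n$. La propret\'e de l'action entra\^inerait que l'ensemble $\{g\in\Gamma : g\cdot K \cap K \neq \emptyset\}$ est relativement compact dans $G$, d'o\`u la bornitude de $(\gamma_n)$.

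Pour la r\'eciproque, je raisonnerais par contraposition. Si l'action n'est pas propre, il existe un compact $K\subset U$ tel que $F_K = \{g\in\Gamma : g\cdot K \cap K \neq \emptyset\}$ ne soit pas relativement compact dans $G$; je choisirais dans $F_K$ une suite $(\gamma_n)$ sans valeur d'adh\'erence dans $G$, puis, pour chaque $n$, un point $x_n \in K$ avec $\gamma_n \cdot x_n \in K$. Par compacit\'e de $K$, on peut extraire une sous-suite telle que $x_n \to x \in K \subset U$; la suite $(\gamma_n \cdot x_n)$, contenue dans le compact $K$ de $U$, est alors born\'ee dans $U$. L'hypoth\`ese de droite fournirait la bornitude de $(\gamma_n)$, contredisant la construction initiale.

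Le seul point demandant un peu d'attention est l'interpr\'etation correcte de l'expression \emph{born\'ee dans $U$}: il faut la comprendre comme \emph{contenue dans un compact de $U$}, et non simplement comme \emph{poss\'edant une sous-suite convergente dans $X$}, puisque la limite pourrait tomber sur le bord $\partial U$ sans appartenir \`a $U$. Cette distinction, b\'enigne dans l'\'enonc\'e pr\'esent, est pr\'ecis\'ement ce qui rend la propret\'e d'une action sur un ouvert strict plus subtile que sur $X$ tout entier, et motivera l'\'etude des d\'eg\'en\'erescences d'orbites au bord dans la suite de l'article.
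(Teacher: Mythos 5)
Votre d\'emonstration est correcte et compl\`ete : le sens direct par inclusion des deux suites dans un compact commun, la r\'eciproque par contraposition en extrayant une sous-suite convergente dans $K$, et la mise en garde finale sur le sens de \og born\'ee dans $U$\fg{} (relativement compacte \emph{dans} $U$, et non dans $X$) est exactement la subtilit\'e pertinente. L'article \'enonce cette proposition sans d\'emonstration, la tenant pour classique; votre argument est l'argument standard attendu.
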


En particulier, si l'action de $\Gamma$ préserve une distance sur $U$ adaptée à la topologie de variété de $U$, alors cette action est propre. Il est clair que lorsque $\Gamma$ est discret, son action sur $U$ est propre si et seulement si elle est \emph{proprement discontinue}, c'est-à-dire que pour tout compact $K$ de $U$, l'ensemble des $g \in \Gamma$ tels que $g\cdot K$ intersecte $K$ est fini.

L'action de $\Gamma$ sur $U$ est dite \emph{libre} si le stabilisateur de tout point est réduit à l'identité. L'action de $\Gamma$ sur $U$ est proprement discontinue et libre si et seulement si le quotient $\Gamma \backslash U$ est une variété de même dimension que $U$.

Pour qu'une action proprement discontinue soit libre, il suffit qu'elle soit sans torsion. Rappelons que, d'après le théorème de Selberg (\cite{Selberg89}), un sous-groupe de type fini d'un groupe linéaire est virtuellement sans torsion. Dans les cas qui nous concernent, on perdra donc peu de généralité en supposant que les actions propres et discrètes sont également libres.

Pour finir, on dira que l'action de $\Gamma$ sur $U$ est \emph{cocompacte} s'il existe un compact $K$ de $U$ tel que $\Gamma \cdot K$ recouvre $U$. Autrement dit, $\Gamma$ agit cocompactement sur $U$ si le quotient de $U$ par $\Gamma$ est \emph{paracompact} (i.e. ``compact sans être séparé'').

Lorsque l'action de $\Gamma$ sur $U$ est propre discontinue, le quotient est nécessairement séparé, et l'action est cocompacte si et seulement si le quotient est compact. Toutefois, la propriété de cocompacité telle que nous l'avons énoncée fournit de nombreuses informations même lorsque le quotient n'est pas séparé.

\section{Les $(G,X)$-structures et leur espace de déformation} \label{Rappel}

\label{GXstructure}

Considérons $G$ un groupe de Lie (de dimension finie), et $H$ un sous-groupe fermé de $G$, de sorte que le quotient $X = G/H$ soit un espace $G$-homogène (i.e. une variété lisse munie d'une action transitive de $G$). Soit enfin $M$ une variété topologique de même dimension que $X$. Thurston introduit dans \cite{Thurston80} la notion de $(G,X)$-structure, dont l'idée remonte à Ehresmann.

\begin{definition}
Une $(G,X)$-structure sur $M$ est la donnée d'un atlas $(U_i, \phi_i)_{i\in I}$ où $(U_i)_{i\in I}$ est un recouvrement ouvert de $M$ et $\phi_i$ un difféomorphisme de $U_i$ dans un ouvert de $X$, tels que pour tous $i,j$, sur chaque composante connexe de $U_i \cap U_j$, il existe $g \in G$ tel que 
$${\phi_j} = g\cdot  {\phi_i}.$$
\end{definition}
On dira aussi que $M$ est localement modelée sur $X$, ou encore que $M$ est une $(G,X)$-variété.

\begin{rmq}
Si $X$ n'est pas simplement connexe, il existe une extension $\tilde{G}$ de $G$ qui agit transitivement sur $\tilde{X}$, et toute $(G,X)$-structure induit une $(\tilde{G}, \tilde{X})$-structure. On peut donc sans perte de généralité supposer que $X$ est simplement connexe, ce que nous ferons, sauf mention du contraire.
\end{rmq}

Une $(G,X)$-structure sur $M$ induit naturellement une structure de variété lisse. Plus généralement, si $M$ est localement modelée sur $X$, toute structure géométrique\footnote{La notion de \emph{structure géométrique} a été formalisée par Gromov dans \cite{Gromov88}. On peut y penser ici dans un sens vague comme un objet attaché à la variété, tel qu'une forme différentielle ou une métrique pseudo-riemannienne.} sur $X$ invariante sous l'action de $G$ induit une structure géométrique sur $M$ qui lui est localement isomorphe. Par exemple, si $G$ préserve une métrique pseudo-riemannienne sur $X$, une $(G,X)$-structure sur $M$ induit une métrique pseudo-riemannienne sur $M$, telle que les cartes locales de l'atlas de la $(G,X)$-structure sont des isométries locales.

Réciproquement, soit $X$ une variété munie d'une métrique pseudo-riemannienne homogène. Supposons de plus que toutes les isométries locales de $X$ dans $X$ se prolongent en des isométries globales. Soit $M$ une variété pseudo-riemannienne localement isométrique à $X$. Alors un atlas $(U_i, \phi_i)$ formé d'isométries locales de $M$ dans $X$ induit naturellement une $(\Isom(X),X)$-structure sur $M$.

Par exemple, une $(SL(2,\R), \H^2)$-variété est une surface munie d'une métrique de courbure $-1$. Les $(G,X)$-structures que nous étudierons dans la suite sont des $(G,X)$-structures pseudo-riemanniennes.

\subsection{Espace des déformations des $(G,X)$-structures sur une variété}
\label{GXDeformation}
\begin{proposition}
Une $(G,X)$-structure sur $M$ induit une paire $(\dev,\rho)$ où $\rho$ est un morphisme de $\pi_1(M) \to G$ et $\dev$ un difféomorphisme local de $\tilde{M}$ dans $X$ qui est $\rho$-équivariant, c'est-à-dire tel que le diagramme suivant commute:

\begin{displaymath}
\begin{array}{rcl}
\tilde{M} & \stackrel{\dev}{\longrightarrow} & X \\
g\downarrow & \ & \downarrow \rho(g) \\
\tilde{M} & \stackrel{\dev}{\longrightarrow} & X \\
\end{array}
\end{displaymath}

$\dev$ est appelée une application développante et $\rho$ un morphisme d'holonomie pour la $(G,X)$-variété $M$. Réciproquement, la donnée d'un couple $(\dev,\rho)$ induit une $(G,X)$-structure sur $M$.
\end{proposition}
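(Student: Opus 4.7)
Le plan consiste à construire $\dev$ par prolongement analytique des cartes de l'atlas le long des chemins du revêtement universel $\tilde{M}$, en exploitant la simple connexité de ce dernier, puis à extraire le morphisme d'holonomie $\rho$ de l'action des transformations du revêtement sur l'application développante ainsi construite. La démarche est classique et remonte à Ehresmann.

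Pour la construction de $\dev$, je fixerais un point base $\tilde{x}_0 \in \tilde{M}$ et une carte $(U_0, \phi_0)$ de l'atlas au voisinage de $p(\tilde{x}_0)$, où $p : \tilde{M} \to M$ désigne la projection du revêtement. Sur le relevé $\tilde{U}_0$ de $U_0$ contenant $\tilde{x}_0$, je poserais $\dev = \phi_0 \circ p$. Pour un point quelconque $\tilde{y} \in \tilde{M}$, je choisirais un chemin $\tilde{c}$ de $\tilde{x}_0$ à $\tilde{y}$ recouvert par une chaîne finie de relevés $\tilde{U}_0, \tilde{U}_1, \dots, \tilde{U}_k$ telle que chaque intersection $\tilde{U}_i \cap \tilde{U}_{i+1}$ soit connexe et non vide; l'existence, sur chaque intersection correspondante de $M$, d'un élément $g_i \in G$ avec $\phi_i = g_i \cdot \phi_{i-1}$ permettrait de prolonger récursivement $\dev$ à $\tilde{U}_k$ en imposant la compatibilité avec la valeur déjà construite. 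L'indépendance du résultat vis-à-vis du chemin se déduit alors de la simple connexité de $\tilde{M}$: tout changement de chemin se ramène à une homotopie, qui se décompose en modifications locales élémentaires (remplacement d'un couple de cartes voisines par un couple équivalent), lesquelles ne modifient pas la valeur de $\dev(\tilde{y})$ grâce à la rigidité de l'action de $G$ sur $X$. L'application $\dev$ ainsi obtenue est un difféomorphisme local de $\tilde{M}$ vers $X$.

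Pour le morphisme d'holonomie, à tout $\gamma \in \pi_1(M)$ agissant sur $\tilde{M}$ par transformation de revêtement, l'application $\dev \circ \gamma$ est elle-même une application développante pour la même $(G,X)$-structure. Elle coïncide donc, sur $\tilde{U}_0$, avec $g \cdot \dev$ pour un unique $g \in G$ que je noterais $\rho(\gamma)$; par connexité de $\tilde{M}$, la relation $\dev \circ \gamma = \rho(\gamma) \cdot \dev$ s'étend à tout $\tilde{M}$. L'unicité de $\rho(\gamma)$ fournit aussitôt la propriété morphique $\rho(\gamma_1 \gamma_2) = \rho(\gamma_1) \rho(\gamma_2)$. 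Pour la réciproque, étant donné un couple $(\dev, \rho)$, je construirais un atlas de $M$ en recouvrant $\tilde{M}$ par des ouverts $\tilde{V}$ suffisamment petits pour que $\dev$ et $p$ y soient tous deux des difféomorphismes sur leur image; les cartes $\phi = \dev \circ (p|_{\tilde{V}})^{-1}$ forment alors un atlas dont les changements de cartes sont dans $G$, comme conséquence directe de la $\rho$-équivariance.

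La principale difficulté technique réside dans la vérification de la bonne définition de $\dev$, c'est-à-dire l'invariance par homotopie du résultat du prolongement analytique. Cette invariance repose de manière essentielle sur une propriété implicite de rigidité de l'action de $G$ sur $X$ (à savoir que deux éléments de $G$ coïncidant sur un ouvert non vide sont nécessairement égaux), propriété vérifiée dans tous les cadres considérés dans cet article, mais qu'il convient d'invoquer explicitement. Une fois ce point acquis, l'ensemble de la construction se déroule de manière routinière.
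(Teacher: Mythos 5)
Le texte de l'article énonce cette proposition sans démonstration, la considérant comme un fait classique remontant à Ehresmann et Thurston; votre construction par prolongement le long de chaînes de cartes dans $\tilde{M}$, suivie de l'extraction de l'holonomie par unicité de l'élément de $G$ reliant $\dev$ et $\dev\circ\gamma$, est précisément l'argument standard attendu. Votre remarque sur la rigidité de l'action (un élément de $G$ coïncidant avec un autre sur un ouvert non vide lui est égal, ce qui vaut pour tout espace homogène $G/H$ à action fidèle par analyticité) est pertinente et comble le seul point que l'énoncé laisse implicite.
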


Cependant, à une $(G,X)$-structure ne correspond pas un unique couple $(\dev,\rho)$. Le groupe $G$ agit sur l'ensemble des couples $(\dev,\rho)$ par
$$g \cdot (\dev, \rho) = (g\circ \dev, g \rho g^{-1}),$$
et les orbites de $G$ sont exactement les classes de couples $(\dev, \rho)$ correspondant à une même $(G,X)$-structure.

En outre, on veut identifier deux $(G,X)$-structures sur une variété $M$ lorsque l'une est l'image de l'autre par un difféomorphisme de $M$ homotope à l'identité. L'\emph{espace de déformation} des $(G,X)$-structures sur $M$ sera donc l'ensemble :

$$\mathrm{Def}_{(G,X)}(M) = G \backslash \{(\dev,\rho), \dev : \tilde{M} \to X\ \rho\textrm{-équivariant} \} / \Diff_0(M).$$
\\

Cet espace peut être muni de la  topologie induite par passage au quotient de la topologie de la convergence $C^1$ sur tout compact de l'application développante, ce qui en fait un espace topologique séparé. Le théorème suivant affirme que, pour une variété compacte $M$, l'application qui à une $(G,X)$-structure sur $M$ associe son morphisme d'holonomie est un homéomorphisme local de $\mathrm{Def}_{(G,X)}(M)$ dans l'espace des représentations de $\pi_1(M) \to G$ modulo conjugaison.

\begin{CiteThm}[Ehresmann, Thurston,\cite{Thurston80}]

Soit $M$ une variété compacte. L'espace $\mathrm{Def}_{(G,X)}(M)$ est localement paramétré par les morphismes d'holonomie, dans le sens suivant.
Soit $(\dev,\rho)$ une $(G,X)$-structure sur $M$. Si $\rho'$ est un morphisme de $\pi_1(M)$ dans $G$ suffisamment proche de $\rho$, alors $\rho'$ est l'holonomie d'une $(G,X)$-structure proche de $(\dev, \rho)$. De plus, s'il existe $\dev'_1$ et $\dev'_2$ proches de $\dev$ et telles que $(\dev'_1, \rho')$ et $(\dev'_2, \rho')$ sont des $(G,X)$-structures, alors il existe un élément $\phi$ de $\Diff_0(M)$ tel que $\dev'_2 = \dev'_1 \circ \tilde{\phi}$.
\end{CiteThm}

De plus, comme $M$ est compacte, son groupe fondamental est de présentation finie, et $\mathrm{Def}_{(G,X)}(M)$ est donc localement homéomorphe à une variété algébrique de dimension finie.

\subsection{Complétude des $(G,X)$-structures}
\label{Geodesique}

Une $(G,X)$-structure sur $M$ est dite \emph{complète} si son application développante est un difféomorphisme global de $\tilde{M}$ dans $X$. Lorsqu'une $(G,X)$-structure est complète, l'application développante identifie le revêtement universel de $M$ au modèle $X$, sur lequel $\pi_1(M)$ agit discrètement, proprement et librement via l'holonomie. Les $(G,X)$-variétés complètes sont donc les quotients de $X$ par un sous-groupe discret de $G$ agissant proprement et librement.

Le terme de \emph{complétude} provient du rapport, dans le cas de $(G,X)$-structures pseudo-riemanniennes, avec la complétude géodésique. Rappelons qu'à l'instar des métriques riemanniennes, les métriques pseudo-riemanniennes possèdent une unique \emph{connexion de Levi-Civita} $\nabla$. Les courbes solutions de l'équation $\nabla_{\gammapoint} \gammapoint \ = 0$ sont appelées géodésiques de la métrique (même si elles ne se comprennent plus comme des courbes minimisantes).
Le flot géodésique peut être vu comme le flot d'un champ de vecteur sur l'espace total du fibré tangent à la variété, et la métrique est dite \emph{géodésiquement complète} si le flot de ce champ de vecteur est complet, c'est-à-dire s'il est défini sur tout $TM \times \R$.

\begin{proposition} \label{PropGeodComp}
Supposons que $G$ agisse transitivement sur une variété $X$ en préservant une métrique pseudo-riemannienne $g_X$ géodésiquement complète. Toute $(G,X)$-variété $M$ hérite naturellement d'une métrique pseudo-riemannienne $g_M$ localement isométrique à $g_X$. Alors la $(G,X)$-structure de $M$ est complète si et seulement si $g_M$ est géodésiquement complète.
\end{proposition}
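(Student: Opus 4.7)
\emph{Plan de preuve.} On raisonne par double implication, en exploitant que l'application d�veloppante $\dev : \tilde{M} \to X$ est, par construction, une isom�trie locale entre $(\tilde{M}, \tilde{g}_M)$ et $(X, g_X)$, o� $\tilde{g}_M$ d�signe l'image r�ciproque de $g_M$ par la projection $\tilde{M}\to M$.

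Le sens direct est imm�diat: si la $(G,X)$-structure est compl�te, $\dev$ est un diff�omorphisme global, donc une isom�trie globale, et la compl�tude g�od�sique de $g_X$ se transmet � $\tilde{g}_M$. Elle descend ensuite � $g_M$ car la projection $\tilde{M}\to M$ est un rev�tement pseudo-riemannien: toute g�od�sique maximale de $M$ se rel�ve en une g�od�sique de $\tilde{M}$, donc est d�finie sur tout $\R$.

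R�ciproquement, supposons $g_M$ g�od�siquement compl�te. Le m�me argument de rel�vement montre que $\tilde{g}_M$ l'est aussi. Il suffit alors de prouver que l'isom�trie locale $\dev : (\tilde{M}, \tilde{g}_M) \to (X, g_X)$ est un diff�omorphisme. Ceci r�sultera de l'�tape cl�, qui constitue le c\oe ur technique de la preuve: toute isom�trie locale d'une vari�t� pseudo-riemannienne connexe g�od�siquement compl�te vers une vari�t� pseudo-riemannienne connexe est un rev�tement surjectif. L'argument classique, d� essentiellement � Kobayashi--Nomizu, repose sur la commutation de $\dev$ avec l'exponentielle,
\[
\dev\bigl(\exp^{\tilde{M}}_p(v)\bigr) = \exp^X_{\dev(p)}\bigl(\d\dev_p(v)\bigr),
\]
jointe � la compl�tude de $\tilde{g}_M$: le membre de gauche �tant d�fini pour tout $v \in T_p\tilde{M}$, on peut relever � $\tilde{M}$ toute g�od�sique de $X$ issue de $\dev(p)$, puis toute courbe lisse en la d�coupant en petits segments g�od�siques contenus dans des voisinages o� $\dev$ est un diff�omorphisme local. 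Cela fournit � la fois la surjectivit� et la propri�t� de rel�vement des chemins, d'o� le caract�re rev�tement. Comme $X$ est suppos�e simplement connexe (voir la remarque en section~\ref{GXstructure}) et $\tilde{M}$ connexe, $\dev$ est donc un diff�omorphisme, et la $(G,X)$-structure est compl�te.
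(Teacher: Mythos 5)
Votre preuve est correcte et suit essentiellement la m\^eme d\'emarche que celle du texte : le sens direct par transport de la compl\'etude via l'isom\'etrie globale $\dev$, et le sens r\'eciproque en montrant que $\dev$ est un rev\^etement gr\^ace \`a la commutation $\dev\circ\exp^{\tilde M}=\exp^X\circ\,\d\dev$ et \`a la compl\'etude de $\tilde g_M$, puis en concluant par la simple connexit\'e de $X$. La seule diff\'erence est de pr\'esentation : vous invoquez l'\'enonc\'e classique de Kobayashi--Nomizu l\`a o\`u le texte red\'emontre directement la propri\'et\'e de rel\`evement des chemins.
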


\begin{proof}
Par définition de $g_M$, l'application développante est une isométrie locale, et envoie donc une géodésique sur une géodésique. Supposons que la $(G,X)$-structure sur $M$ est complète. Alors $\dev : (\tilde{M}, g_{\tilde{M}}) \to (X, g_X)$ est une isométrie globale, et $g_{\tilde{M}}$ (et donc $g_M$) est géodésiquement complète si et seulement si $g_X$ l'est aussi.

Réciproquement, si $g_X$ et $g_M$ sont géodésiquement complètes, montrons que l'application développante est un revêtement. Comme c'est un difféomorphisme local, il suffit de vérifier qu'elle possède la propriété de relèvement des chemins. Par l'absurde, supposons qu'il existe un chemin continu $c: [0,1] \to X$ et un point $m \in \dev^{-1}(c(0))$ tel que $c$ se relève en partant de $m$ sur $[0,1)$ mais pas sur $[0,1]$. Soit $\tilde{c} : [0,1) \to \tilde{M}$ ce relèvement. 

Comme le chemin $c$ dans $X$ est compact, il existe un temps $t$ pour lequel est $c([t,1])$ est inclus dans le domaine d'injectivité du flot géodésique partant de $c(t)$. Autrement dit, il existe un ouvert $U \in T_{c(t)}X$ contenant $0$ tel que le flot géodésique ${\exp_x}_{|U}$ est un difféomorphisme local injectif dont l'image contient $c([t,1])$. Soit alors $\tilde{U}$ l'ouvert de $T_{\tilde{c}(t)}\tilde{M}$ image réciproque de $U$ par la différentielle de l'application développante. Comme l'application développante envoie géodésique sur géodésique, le diagramme suivant commute : 

\begin{displaymath}
\begin{array}{rcl}
\tilde{U} & \stackrel{\d_{\tilde{c}(t)}\dev}{\longrightarrow} & U \\
\exp_{\tilde{c}(t)}\downarrow & \ & \downarrow \exp_{c(t)} \\
\tilde{M} & \stackrel{\dev}{\longrightarrow} & X \\
\end{array}
\end{displaymath}

Posons alors 
\begin{eqnarray}
\hat{c}(u) & = & \tilde{c}(u)\quad \textrm{si $u\leq t$} \\
\ & = & \exp_{\tilde{c}(t)} \circ \d_{c(t)} \dev^{-1} \circ \exp_{c(t)}^{-1}(c(u)) \quad \textrm{si $u \in (t,1]$}
\end{eqnarray}

$\hat{c}$ est bien défini, continu, et relève $c$ en partant de $m$ sur $[0,1]$. L'application développante est donc un revêtement. Comme on a supposé que $X$ était simplement connexe, c'est bien un difféomorphisme.
\end{proof}

Dans le cadre riemannien, le théorème de Hopf-Rinow affirme qu'une métrique est géodésiquement complète si et seulement si la distance qu'elle induit est complète. En particulier toute variété riemannienne compacte est géodésiquement complète, et toute variété riemannienne globalement homogène est géodésiquement complète. En combinant ces remarques avec la proposition précédente, on obtient:

\begin{CiteThm}[Ehresman, \cite{Ehresmann36}]
Soit $G$ un groupe de Lie agissant transitivement sur $X$ en préservant une métrique riemannienne. Alors toute $(G,X)$-structure compacte est complète.
\end{CiteThm}

Mais tout ceci tombe en défaut lorsque la métrique n'est plus définie positive. D'une part, une variété pseudo-riemannienne globalement homogène n'est pas nécessairement géodésiquement complète. D'autre part, une variété pseudo-riemannienne compacte peut aussi ne pas être géodésiquement complète. (Des contre-exemples apparaissent dès la dimension 3. Voir par exemple \cite{GuediriLafontaine95}.)

Le premier problème ne se pose plus lorsque l'on impose une condition plus forte que l'homogénéité: la symétrie (au sens de Cartan).

\begin{definition}
Une variété pseudo-riemannienne $X$ est symétrique au sens de Cartan si pour tout point $x\in X$, l'application $-\Id : T_xX \to T_xX$ s'étend via le flot géodésique en une isométrie globale.
\end{definition}

\begin{proposition}
Les variétés pseudo-riemanniennes symétriques sont géodésiquement complètes.
\end{proposition}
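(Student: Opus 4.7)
Le plan est d'utiliser directement la d�finition: les sym�tries globales permettent de \og retourner \fg\ toute g�od�sique autour de chacun de ses points, et donc de la prolonger au-del� de tout temps fini. Soit $\gamma:(a,b)\to X$ une g�od�sique maximale, et supposons par l'absurde $b < +\infty$. Je choisirais alors $t_0 \in \left( \frac{a+b}{2}, b \right)$ et consid�rerais la sym�trie globale $\sigma := s_{\gamma(t_0)}$ fournie par la d�finition, isom�trie dont la diff�rentielle en $\gamma(t_0)$ vaut $-\Id$.

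La premi�re �tape consiste � observer que $\sigma \circ \gamma$ est encore une g�od�sique, comme image d'une g�od�sique par une isom�trie. Or, en $t = t_0$, cette g�od�sique passe par $\gamma(t_0)$ avec vitesse $-\gammapoint(t_0)$. Par unicit� des g�od�siques � donn�es initiales fix�es, on en d�duit l'identit�
$$\sigma(\gamma(t)) = \gamma(2t_0 - t),$$
valable sur l'intersection des domaines o� les deux membres ont un sens, c'est-�-dire pour $t \in (2t_0 - b, b)$.

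La seconde �tape est d'exploiter cette identit� pour construire un prolongement. Je poserais $\hat{\gamma}(t) := \sigma(\gamma(2t_0 - t))$ pour $t \in (2t_0 - b, 2t_0 - a)$; le membre de droite a bien un sens puisque $2t_0 - t$ reste alors dans $(a,b)$. Le choix $t_0 > (a+b)/2$ garantit $2t_0 - a > b$, de sorte que $\hat{\gamma}$ est d�finie strictement au-del� de $b$. Elle co�ncide avec $\gamma$ sur l'intersection $(2t_0 - b, b)$ gr�ce � l'identit� pr�c�dente, et elle demeure une g�od�sique puisqu'elle s'�crit localement comme l'image d'une g�od�sique par une isom�trie. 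Ceci contredit la maximalit� de $\gamma$; le cas $a > -\infty$ se traite sym�triquement.

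Je n'anticipe aucune difficult� majeure: toute la preuve tient dans l'identit� $\sigma_{\gamma(t_0)}(\gamma(t)) = \gamma(2t_0 - t)$, qui traduit simplement l'unicit� des g�od�siques. Le seul point � v�rifier soigneusement est que l'hypoth�se $b < +\infty$ permet effectivement de choisir $t_0$ assez proche de $b$ dans $(a,b)$ pour que l'intervalle prolong� d�passe strictement $b$, ce qui est imm�diat puisque $(a+b)/2 < b$.
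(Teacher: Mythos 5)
Votre preuve est correcte et suit essentiellement la m\^eme d\'emarche que celle de l'article : on r\'efl\'echit la g\'eod\'esique par la sym\'etrie centrale en $\gamma(t_0)$ pour obtenir le prolongement $t \mapsto s_{\gamma(t_0)}(\gamma(2t_0 - t))$, puis on conclut par maximalit\'e. La seule diff\'erence est cosm\'etique : l'article v\'erifie directement que ce prolongement a m\^emes position et vitesse que $\gamma$ en $t_0$ et invoque l'unicit\'e, tandis que vous passez par l'identit\'e $\sigma(\gamma(t)) = \gamma(2t_0 - t)$ (ce qui utilise implicitement que $\sigma$ est involutive), mais l'argument est le m\^eme.
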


\begin{proof}
Soit $X,g$ une variété pseudo-riemannienne symétrique au sens de Cartan.
Soit $\gamma$ une géodésique. Soit $I$ l'intervalle maximal sur lequel $\gamma$ est définie. Montrons que $I = \R$. Soit $t_0\in I$, et soit $s_{t_0}$ la symétrie centrale de centre $\gamma(t_0)$, c'est-à-dire l'unique isométrie globale de $X$ fixant $\gamma(t_0)$ et dont la différentielle en $\gamma(t_0)$ est $-\Id$.
Posons $\eta(t) = s_{t_0}(\gamma (2t_0 - t))$. La courbe $\eta(t)$ est une géodésique définie sur l'intervalle $J = 2t_0 - I$, et on a $\eta(t_0) = \gamma(t_0)$ et $\stackrel{.}{\eta}(t_0) = \gammapoint(t_0)$. Donc $\eta$ prolonge $\gamma$. Par maximalité de $I$, on a $J\subset I$, et $I$ est donc stable par symétrie de centre $t_0$. Comme ceci est vrai pour tout $t_0 \in I$, on a clairement $I = \R$.
\end{proof}

Dans la suite, nous nous intéresserons à deux familles d'espaces pseudo-riemanniens symétriques.

Tout d'abord, les espaces pseudo-riemanniens de courbure constante. Ce sont ceux qui possèdent un groupe d'isométries global ``maximal'' (i.e. de dimension $\frac{n(n+1)}{2}$, où $n$ est la dimension de l'espace). Il en existe une construction explicite (\cite{Wolf74}, p.63) qui généralise celle des espaces riemanniens de courbure constante : $\R^n$, $\mathbb{S}^n$ et $\H^n$. En particulier, les espaces pseudo-riemanniens de courbure nulle sont les espaces affines munis d'une métrique pseudo-riemannienne invariante par translation. Leur groupe d'isométries est le groupe des transformations affines dont la partie linéaire préserve la métrique.

Une autre famille d'exemples est obtenue en considérant un groupe de Lie muni d'une métrique invariante par translations à gauche et à droite, et en particulier un groupe de Lie semi-simple muni de sa métrique de Killing (voir section \ref{GroupeLie}). Dans ces cas-là, les géodésiques partant de l'élément neutre sont les sous-groupes à 1 paramètre.

Nos résultats de complétude pourront donc être compris comme des résultats de complétude géodésique de certaines variétés localement modelées sur des espaces pseudo-riemanniens symétriques.

\subsection{$(G,X)$-structures kleiniennes et domaines divisibles} \label{Kleinienne}
Un obstacle important à la compréhension de la topologie des $(G,X)$-variétés est la potentielle complexité de l'application développante. En dehors des situations où l'on sait qu'elle est nécessairement un difféomorphisme global, elle n'est en général ni surjective, ni injective, ni même un revêtement sur son image. Les contre-exemples les plus connus apparaissent en géométrie projective complexe.

Considérons ainsi une surface compacte $S$ de genre $g\geq 2$ et munissons-la d'une structure complexe. L'uniformisation de cette structure complexe identifie $S$ à un quotient du demi-plan de Poincaré par un réseau de $\PSL(2,\R)$, ce qui est un cas particulier de structure projective complexe appelée structure fuchsienne. D'après le théorème d'Ehresmann-Thurston, si l'on déforme la représentation d'holonomie dans $\PSL(2,\C)$, la nouvelle représentation reste l'holonomie d'une structure projective. Lorsque la déformation est petite, l'application développante reste injective, et son image est un domaine simplement connexe bordé par une courbe de Jordan de dimension de Hausdorff $>1$ (on obtient une structure \emph{quasi-fuchsienne}). Toutefois, si l'on continue à déformer, l'application développante finit par se ``recouper'' et devient très complexe. D'après le théorème de Gallo-Kapovich-Marden (\cite{GalloKapovichMarden00}), presque toutes les représentations de $\pi_1(S)$ dans $\PSL(2,\C)$ qui se relèvent à $\SL(2,\C)$ sont l'holonomie d'une structure projective sur $S$. Beaucoup de ces représentations ne sont pas d'image discrète, et les applications développantes associées sont loin d'être quasi-fuchsiennes.

Pour généraliser la notion de structure quasi-fuchsienne à d'autres géométries, Kularni et Pinkall introduisent dans \cite{KulkarniPinkall85} la terminologie de \emph{structure kleinienne}, que nous reprenons ici.

\begin{definition} 
Soit $M$ une $(G,X)$-variété, et $(\dev,\rho)$ une application développante et un morphisme d'holonomie associés.
Lorsque $\dev$ est un revêtement sur son image, on dit que la $(G,X)$-structure sur $M$ est \emph{quasi-kleinienne}. Elle est dite \emph{kleinienne} si, de plus, le groupe $\pi_1(M)/\ker(\rho)$ agit proprement discontinument et librement sur l'image de $\dev$. Dans ce cas, la $(G,X)$-structure identifie $M$ à un quotient d'un ouvert de $X$ par un sous-groupe discret de $G$ agissant proprement et librement. 
\end{definition}

En particulier, si $\dev$ est injective, la $(G,X)$-structure est kleinienne. Notamment, les $(G,X)$-variétés complètes sont kleiniennes. La notion de structure kleinienne est à rapprocher de celle d'\emph{ouvert divisible}:

\begin{definition}

Soit $G$ un groupe de Lie et $X$ un espace $G$-homogène. Un ouvert $U$ de $X$ est \emph{divisible} s'il existe un sous-groupe discret de $G$ agissant proprement et cocompactement sur $U$. Autrement dit, $U$ est divisible s'il existe un pavage $G$-régulier de $U$.
\end{definition}

Si une variété compacte possède une $(G,X)$-structure kleinienne, l'image de l'application développante est un ouvert de $X$ divisible par $G$. Réciproquement, si un ouvert $U$ est divisible par $G$, soit $\Gamma$ un sous-groupe discret de $G$ agissant proprement et cocompactement sur $U$. Si, de plus, $\Gamma$ est sans torsion, il agit librement sur $U$, et le quotient de $U$ par $\Gamma$ fournit une $(G,X)$-variété kleinienne. Dans tous les exemples que nous considérons, le lemme de Selberg nous garantit que $\Gamma$ possède un sous-groupe d'indice fini sans torsion. Par conséquent, L'existence d'ouverts de $X$ divisibles par l'action de $G$ est équivalente à l'existence de $(G,X)$-variétés compactes kleiniennes.

Une bonne façon de trouver des $(G,X)$-variétés compactes non complètes serait donc de trouver des ouverts divisibles non triviaux. Cependant, la description des ouverts de $X$ divisibles sous l'action de $G$ est un problème difficile. Kozul, puis Benoist ont étudié les convexes de $\R^n$ divisibles par $\PGL(n+1, \R)$, donnant ainsi des exemples de structures projectives réelles exotiques sur les variétés compactes (lire par exemple \cite{BenoistConv4}). Toutefois, pour de nombreuses autres géométries, on ne sait pas s'il existe des ouverts divisibles non triviaux.\\

Nos théorèmes peuvent donc s'interpréter de trois façons différentes: comme des résultats de complétude de certaines $(G,X)$-structures, comme des résultats de complétude géodésique de certaines variétés pseudo-riemanniennes localement homogènes, ou encore comme des résultats d'inexistence d'ouverts divisibles non triviaux dans certains espaces homogènes.

\section{Autour de la conjecture de Markus} \label{Affine} \label{Markus}

Une \emph{structure affine} sur une variété $M$ est simplement une $(\Aff(\R^n), \R^n)$-structure. Comme le groupe affine préserve une connexion linéaire plate et sans torsion sur $\R^n$ (celle pour laquelle les champs invariants par translation sont parallèles), toute variété affine hérite d'une connexion plate et sans torsion. Réciproquement, une connexion plate et sans torsion sur une variété $M$ induit une structure affine. De plus, la structure affine est complète si et seulement si la connexion qu'elle induit est géodésiquement complète (la preuve est identique à celle de la proposition \ref{PropGeodComp}).

Il existe de nombreux exemples de variétés affines compactes non complètes. Les plus simples sont les variétés de Hopf, quotients de $\R^n \backslash \{0\}$ par un sous-groupe monogène d'homothéties linéaires. Le problème de la caractérisation des structures affines compactes complètes a été formulé par Markus.

\begin{conjecture}[Markus, \cite{Markus63}]
Soit $M$ une variété compacte munie d'une structure affine. Si $M$ possède une forme volume parallèle, alors $M$ est complète. Autrement dit, les $(SL(n,\R) \ltimes \R^n, \R^n)$-variétés compactes sont complètes.
\end{conjecture}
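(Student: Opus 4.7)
La stratégie naturelle est un raisonnement par l'absurde prolongeant celui de Carrière. Je supposerais qu'il existe une variété affine compacte $M$ possédant une forme volume parallèle mais non complète; alors la partie linéaire de l'holonomie $\rho : \pi_1(M) \to \SL(n,\R) \ltimes \R^n$ prend ses valeurs dans $\SL(n,\R)$, et l'application développante $\dev : \tilde M \to \R^n$ n'est pas un difféomorphisme global.

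Dans un premier temps, je me placerais dans le cas kleinien, à l'image de ce qui est fait pour les théorèmes \ref{ThmAffine} et \ref{ThmHoloPlat}. Posant $U = \dev(\tilde M) \subsetneq \R^n$ et $\Gamma = \rho(\pi_1(M))$, l'ouvert $U$ est divisible par $\Gamma$; son bord $\partial U$ est un fermé non vide et $\Gamma$-invariant. Je fixerais un domaine fondamental compact $K \subset U$, un point $x_0 \in \partial U$, et j'extrairais des suites $(\gamma_n) \subset \Gamma$ et $(y_n) \subset K$ telles que $\gamma_n \cdot y_n \to x_0$. La propreté discontinue de l'action force les parties linéaires $L_n$ des $\gamma_n$ à quitter tout compact de $\SL(n,\R)$, et leur décomposition de Cartan $L_n = k_n a_n k_n'$ fait apparaître un drapeau de directions contractées et dilatées.

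Le coeur du raisonnement consisterait à observer qu'une boule euclidienne fixe autour des $y_n$ est envoyée par $\gamma_n$ sur un ellipsoïde inclus dans $U$, et que, puisque $\det L_n = 1$, tout effondrement dans certaines directions se compense par un étirement dans d'autres. Passant à la limite, on ferait apparaître dans $\overline U$ tout un sous-espace affine issu des directions dilatées. Le but serait alors de montrer que $\overline U$ contient ``trop'' de tels sous-espaces pour rester strict, ou de contredire autrement la propreté ou la cocompacité de l'action de $\Gamma$ sur $U$.

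La principale difficulté que j'anticipe est de deux ordres. D'une part, la discompacité de l'action de $\SL(n,\R)$ sur $\R^n$ peut atteindre $n-1$, bien supérieure aux cas de discompacité $1$ (Carrière) ou $2$ (présent article) déjà traités; il faudrait donc contrôler simultanément tout le drapeau de dégénérescence des $L_n$, ce qui nécessiterait probablement des résultats fins sur les projections de Cartan et les ensembles limites des sous-groupes discrets de $\SL(n,\R)$, à la Benoist--Quint. D'autre part, et plus gravement, la conjecture de Markus ne suppose pas a priori que la structure est kleinienne: il faut envisager que $\dev$ ne soit ni injective, ni même un revêtement sur son image. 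Lever cette dernière hypothèse demanderait vraisemblablement un ingrédient d'une nature différente --- par exemple dans l'esprit de la conjecture d'Auslander sur la résolubilité virtuelle de l'holonomie des variétés affines compactes complètes --- et c'est précisément ce verrou qui explique que la conjecture de Markus reste ouverte en général, le présent article n'en démontrant que des cas kleiniens particuliers.
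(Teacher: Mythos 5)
L'énoncé en question est une \emph{conjecture} ouverte : l'article n'en donne aucune démonstration, et se contente d'en prouver des cas particuliers kleiniens pour des groupes structuraux plus rigides ($\U(n-1,1)\ltimes\C^n$, $\SO(3,\C)\ltimes\C^3$). Votre « proposition » ne prétend d'ailleurs pas la démontrer, et vous identifiez correctement à la fois la stratégie naturelle (dégénérescence des ellipsoïdes à la Carrière près du bord d'un ouvert divisible) et les deux verrous réels --- la discompacité arbitrairement grande de $\SL(n,\R)$ et, surtout, l'absence d'hypothèse kleinienne dans la conjecture de Markus --- ce qui est exactement conforme à l'état des lieux dressé par l'article.
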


Cette conjecture a été attaquée sous plusieurs angles. La première voie, empruntée par Smilie puis Fried, Goldman et Hirsch, consiste à ajouter des hypothèses sur la structure du groupe fondamental de $M$.

\begin{CiteThm}[Fried, Goldman, Hirsch,\cite{FGH81}]
Soit $M$ une variété compacte munie d'une structure affine et possédant une forme volume parallèle. Si le groupe fondamental de $M$ est nilpotent, alors $M$ est complète.
\end{CiteThm}

Ce théorème généralise un théorème de Smillie qui supposait le groupe fondamental de $M$ abélien.

L'autre angle d'attaque a consisté à supposer l'existence d'une structure géométrique parallèle plus contraignante qu'une forme volume. Rappelons que les variétés pseudo-riemanniennes plates sont localement modelées sur $\R^n$ muni de l'action affine de $\O(p,n-p)\ltimes \R^n$, où $(p,n-p)$ est la signature de la métrique. \`A revêtement double près, elles sont orientables, et possèdent alors une forme volume parallèle (car tous les éléments de $\O(p,n-p)$ sont de déterminant $\pm 1$). Par conséquent la conjecture de Markus impliquerait que toutes les variétés pseudo-riemanniennes plates sont géodésiquement complètes. C'est ce que Carrière a prouvé dans le cadre lorentzien.

\begin{CiteThm}[Carrière, \cite{Carriere89}]
Soit $M$ une variété compacte munie d'une métrique lorentzienne plate. Alors $M$ est géodésiquement complète (et donc un quotient de $\R^n$ par un sous-groupe de transformations affines lorentziennes). Autrement dit, les $(\O(n-1,1)\ltimes \R^n, \R^n)$-structures sont complètes.
\end{CiteThm}

Citons enfin un théorème de Jo--Kim concernant les ouverts divisibles:

\begin{CiteThm}[Jo, Kim, \cite{JoKim04}]
Soit $M$ une variété affine compacte de la forme $\Gamma \backslash \Omega$, où $\Omega$ est un ouvert convexe strictement inclus dans $\R^n$ et $\Gamma$ un groupe discret de transformations affines. On peut décomposer $\Omega$ sous la forme $\R^k \times \Omega'$, où $\Omega'$ est un convexe propre de $\R^{n-k}$. Supposons que $\Aut_{proj}(\Omega')$ est irréductible. Alors $M$ ne possède pas de forme volume parallèle.
\end{CiteThm}
Ce théorème est, à notre connaissance, le seul qui traite de la conjecture de Markus dans le cas particulier d'une structure kleinienne.\\

La preuve du théorème de Carrière repose sur l'idée que le groupe des transformations lorentziennes est de \emph{discompacité 1}, c'est-à-dire que les ellipsoïdes ne peuvent dégénérer sous l'action de ce groupe qu'en des hyperplans. 
Nous allons nous intéresser dans la suite à l'exemple le plus simple d'action affine de discompacité supérieure à 2.

\subsection{Métriques lorentz-hermitiennes plates}

Convenons d'appeler forme lorentz-hermitienne sur $\C^n$ une forme sesquilinéaire de signature réelle $(2n-2,2)$. Une métrique lorentz-hermitienne sur une variété complexe $M$ est alors une section lisse de la fibration des formes lorentz-hermitiennes sur le fibré tangent.

Dans toute cette section, $X$ désignera $\C^n$ muni de la métrique lorentz-hermitienne invariante par translations qui s'écrit dans la base canonique

$$\d z_1 \overline{\d z_1} + \ldots + \d z_{n-1} \overline{\d z_{n-1}} - \d z_n \overline{\d z_n} ,$$
et $G = \U(n-1,1) \ltimes \C^n$ le groupe des transformations affines complexes préservant cette métrique. On notera $(e_1, \ldots, e_n)$ la base canonique.

Cette géométrie est le premier exemple de géométrie affine auquel le théorème de Carrière ne s'applique pas. Le groupe $\U(n-1,1)$ est, au sens de Carrière, de discompacité 2. Son action contracte une direction complexe, et les ellipsoïdes dégénèrent donc sur des hyperplans complexes, qui sont de codimension réelle 2.

Le théorème \ref{ThmAffine} constitue donc une avancée par rapport au théorème de Carrière. Il est toutefois moins satisfaisant puisqu'il ne prouve pas que \textit{toutes} les variétés lorentz-hermitiennes plates compactes sont complètes, ce qui serait une conséquence de la conjecture de Markus (puisque $\U(n-1,1)$ est inclus dans $\SL(2n,\R)$). Son intérêt est surtout de présenter dans un cadre plus simple la technique qui nous servira dans la section \ref{GroupeLie}.

\subsection{Preuve du théorème \ref{ThmAffine}}

Considérons donc un ouvert $U$ de $\C^n$ et un sous-groupe $\Gamma$ de $\U(n-1,1)\ltimes \C^n$ agissant proprement, discrètement et cocompactement sur $U$. Soit $l : \U(n-1,1)\ltimes \C^n \to U(n-1,1)$ le morphisme qui à une transformation affine de $\C^n$ associe sa partie linéaire, et notons $\Gamma_0$ l'image de $\Gamma$ par $l$.

Nous voulons montrer que $U = \C^n$. Supposons par l'absurde que $U$ est strictement inclus dans $\C^n$. $U$ possède alors un bord, et le point de départ de notre preuve sera d'observer la dynamique de $\Gamma$ près du bord. La proposition suivante affirme que ce bord contient des ``ellipsoïdes dégénérés sous l'action de $\Gamma$'', qui sont des hyperplans complexes.

\begin{proposition} \label{hyperplan}
Fixons sur $\C^n$ une métrique euclidienne plate préservée par $\U(n-1)\times \U(1)$. Alors il existe un $\epsilon >0$ tel que pour tout $y\in \partial U$, il existe un hyperplan affine complexe $H$ contenant $y$, et tel que $H\cap B(y,\epsilon)\subset \partial U$. ($B(y, \epsilon)$ désigne la boule de centre $y$ de rayon $\epsilon$ pour la métrique euclidienne que nous avons fixée.) 

Ce morceau d'hyperplan inclus dans le bord de $U$ est de plus limite de morceaux d'hyperplans inclus dans $U$; c'est-à-dire qu'il existe une suite $y_n$ convergeant vers $y$, et une suite d'hyperplans affines complexes $H_n$ contenant $y_n$ tels que $H_n$ converge vers $H$ ($H_n$ et $H$ étant vus comme des points dans la grassmannienne de $\C^n$) et $H_n \cap B(y_n, \epsilon) \subset U$.
\end{proposition}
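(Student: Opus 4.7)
La strat\'egie reprend l'esprit de la preuve de Carri\`ere, adapt\'ee au fait que l'action lin\'eaire de $\U(n-1,1)$ est, au sens de Carri\`ere, de discompacit\'e $2$: les ellipso\"ides d\'eg\'en\`erent sur des hyperplans complexes, de codimension r\'eelle $2$.

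Je commencerais par produire, pour chaque $y \in \partial U$, une suite d'\'el\'ements de $\Gamma$ tendant vers l'infini. Soit $K \subset U$ un compact tel que $\Gamma K = U$ (qui existe par cocompacit\'e), et posons $r_0 := \frac{1}{2} d(K, \partial U) > 0$, qui ne d\'epend pas de $y$. Choisir $y_n \in U$ avec $y_n \to y$, et \'ecrire $y_n = \gamma_n k_n$ avec $k_n \in K$. Quitte \`a extraire, $k_n \to k \in K$. La propri\'et\'e de propret\'e de l'action sur $U$ impose $\gamma_n \to \infty$ dans $G$; comme $y_n$ reste born\'ee, la partie lin\'eaire $A_n := l(\gamma_n)$ tend vers l'infini dans $\U(n-1,1)$. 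Via la d\'ecomposition de Cartan $A_n = K_n a_n K'_n$, le rang r\'eel $1$ de $\U(n-1,1)$ fournit, apr\`es extraction suppl\'ementaire, des directions limites: une droite complexe expansive $\C w_+$ (valeur propre $e^{t_n}$), une droite complexe contractante $\C w_-$ (valeur propre $e^{-t_n}$) et un sous-espace fixe $F$ de dimension complexe $n-2$, avec $t_n \to +\infty$.

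Ensuite je construirais les hyperplans $H_n$ approximants. Consid\'erons l'ellipso\"ide $E_n := \gamma_n B(k_n, r_0) \subset U$, centr\'e en $y_n$, dont les axes principaux pour la m\'etrique euclidienne fix\'ee valent $r_0 e^{t_n}$ dans la droite complexe $K_n v_+$, $r_0 e^{-t_n}$ dans $K_n v_-$, et $r_0$ dans l'hyperplan complexe $K_n F_0$. D\'efinissons $H_n$ comme l'hyperplan affine complexe passant par $y_n$ et hermitiennement orthogonal \`a $K_n v_-$. L'intersection $E_n \cap H_n$ est un ellipso\"ide de dimension complexe $n-1$ dont tous les demi-axes sont $\geq r_0$, donc contient la boule euclidienne $H_n \cap B(y_n, r_0)$. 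En posant $\epsilon := r_0$ (ind\'ependant de $y$), on obtient ainsi $H_n \cap B(y_n, \epsilon) \subset U$. Par continuit\'e, $H_n \to H := y + (\C w_-)^\perp$ dans la grassmannienne affine, hyperplan passant par $y$, et $H \cap B(y, \epsilon) \subset \overline{U}$ par passage \`a la limite.

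Le point d\'elicat, qui constitue le c\oe{}ur technique, est de v\'erifier que $H \cap B(y, \epsilon') \subset \partial U$ pour un certain $\epsilon' \leq \epsilon$. Raisonnons par l'absurde: supposons qu'un point $p \in H \cap B(y, \epsilon/2)$ appartienne \`a $U$. Approximer $p$ par $p_n \in H_n \cap B(y_n, \epsilon)$, et calculer
$$k'_n := \gamma_n^{-1}(p_n) = k_n + A_n^{-1}(p_n - y_n).$$
Le vecteur $p_n - y_n$ appartient \`a $\Vect_\C(K_n v_+) \oplus K_n F_0$, o\`u $A_n^{-1}$ op\`ere avec norme $\leq 1$ (contraction par $e^{-t_n}$ sur la composante $K_n v_+$, isom\'etrie sur $K_n F_0$ modulo $K'_n$). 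Donc $\| k'_n - k_n \| \leq \| p_n - y_n \| \leq \epsilon$, et $k'_n$ reste dans $\overline{B(K, \epsilon)} \subset U$ par le choix de $\epsilon$. Apr\`es extraction $k'_n \to k' \in U$, on dispose alors de deux suites $k'_n \to k'$ et $\gamma_n k'_n = p_n \to p$ convergeant toutes deux dans l'ouvert $U$, ce qui force $\gamma_n$ \`a rester born\'ee par la caract\'erisation de la propret\'e, en contradiction avec $\gamma_n \to \infty$. Cette derni\`ere \'etape exploite finement la discompacit\'e $2$: le fait que $A_n^{-1}$ contracte (ou pr\'eserve) toutes les directions transverses \`a $w_-$ est exactement ce qui coince l'hyperplan limite $H$ dans le bord.
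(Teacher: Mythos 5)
Votre preuve est correcte et suit essentiellement la m\^eme d\'emarche que celle de l'article : m\^eme utilisation de la cocompacit\'e pour produire $\gamma_n \to \infty$ avec $\gamma_n \cdot k_n \to y$, m\^eme d\'ecomposition de Cartan $l(\gamma_n)=K_n a_n K'_n$ avec l'hyperplan complexe non contract\'e $H_n = K_n H_0$, et m\^eme conclusion par propret\'e (votre dernier paragraphe red\'emontre en ligne le lemme \ref{DgnBord} de l'article, via la contraction de $A_n^{-1}$ sur $H_n$ au lieu de la dilatation de $A_n$ sur $H_0$, ce qui est le m\^eme fait). Seules diff\`erent des broutilles de notation ($v_\pm$ contre $w_\pm$, $F$ contre $F_0$).
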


\begin{proof}

Soit $F$ un domaine compact de $U$ tel que $\Gamma \cdot F$ recouvre $U$. Alors pour tout point $y$ dans le bord de $U$, il existe une suite $\gamma_n \in \Gamma$ telle que $\gamma_n \cdot F$ ``s'accumule'' sur $y$, dans le sens où il existe une suite $x_n$ dans $F$ telle que $\gamma_n \cdot x_n$ converge vers $y$. En particulier, aucune sous-suite de $(\gamma_n \cdot F)$ ne reste à l'intérieur d'un compact de $U$, ce qui implique que $\gamma_n$ tend vers l'infini dans $\Gamma$.

Le lemme suivant établit qu'en fait, toute l'adhérence de la suite $(\gamma_n \cdot F)$ est dans le bord de $U$.

\begin{lemme}\label{DgnBord}
Soit $X$ un espace $G$ homogène, $U$ un ouvert de $X$ et $\Gamma$ un sous-groupe discret de $G$ préservant $U$ et agissant proprement sur $U$.
Soit $\gamma_n$ une suite d'éléments de $\Gamma$ tendant vers l'infini, et $x_n$ une suite de points de $U$ inclus dans un compact de $U$. Alors toute valeur d'adhérence de $\gamma_n \cdot x_n$ est dans $\partial U$.
\end{lemme}
\begin{proof}
En effet, soit $y$ une valeur d'adhérence de $\gamma_n \cdot x_n$. Clairement, $y\in \overline{U}$. Si $y$ était dans $U$, on aurait une sous-suite $\gamma_{n_k}$ et un compact $F'$ tel que $\gamma_{n_k} \cdot F$ intersecte $F'$. C'est absurde car $\gamma_n$ tend vers l'infini et que l'action de $\Gamma$ sur $U$ est propre.
\end{proof}

Bien sûr, en général, l'adhérence de $\gamma_n \cdot F$ pourrait se limiter à un point. Mais ce n'est pas le cas ici, où l'action de $U(n-1,1)$ ne contracte qu'une direction complexe. Pour préciser cela, écrivons la décomposition de Cartan de $l(\gamma_n)$:

$l(\gamma_n)$ peut s'écrire  $k_n a_n k'_n$, où $k_n, k'_n \in \U(n-1)\times \U(1)$ et où $a_n$ s'écrit dans la base $(e_1 + e_n, e_2, \ldots, e_{n-1}, e_1 - e_n)$:

$$
\left(
\begin{array}{ccccc}
\lambda_n & 0 & \ldots & \ & 0 \\
0 & 1 & \ & \ & \ \\
\vdots & \ & \ddots & \ & \vdots \\
\ & \ & \ & 1 & 0 \\
0 & \ & \ldots & 0 & 1/\lambda_n
\end{array}
\right)
$$
avec $\lambda_n$ réel supérieur à $1$. 

Soit $\epsilon > 0$ tel que l'$\epsilon$-voisinage de $F$ est relativement compact dans $U$. Notons $B_\epsilon$ la boule de centre $0$ de rayon $\epsilon$. On sait que $x_n + B_\epsilon \subset U$, et donc que $\gamma_n \cdot \left( x_n + B_\epsilon \right) \subset U$.
Or
$$\gamma_n \cdot \left( x_n + B_\epsilon \right) = y_n + l(\gamma_n) B_\epsilon.$$

Soit $H_0$ l'hyperplan $\Vect^\C (e_1+e_n, e_2, \ldots, e_{n-1})$. La matrice diagonale $a_n$ dilate $H_0$. On en déduit que 
$l(\gamma_n) B_\epsilon$ contient $(k_n H_0) \cap B_\epsilon$.
Posons $H_n = k_n H_0$. Quitte à extraire, on peut supposer que $H_n$ converge vers un hyperplan vectoriel $H$. Alors $U$ contient $y_n + (H_n \cap B_\epsilon)$, et $\overline U$ contient la limite $y + (H\cap B_\epsilon)$. Enfin, $y + (H\cap B_\epsilon)$ est inclus dans $\partial U$, d'après le lemme \ref{DgnBord}.

\end{proof}

\begin{proposition} 
Soit $y\in \partial U$. Alors il existe un unique hyperplan affine complexe $H$ contenant $y$ tel que $H \cap \partial U$ est un ouvert de $H$.
\end{proposition}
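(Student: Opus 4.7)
The plan is a proof by contradiction. Suppose $y\in\partial U$ admits two distinct complex affine hyperplanes $H_1$ and $H_2$, each containing an open neighborhood of $y$ inside $\partial U$: $H_i\cap B(y,\epsilon)\subset\partial U$ for the uniform $\epsilon>0$ given by Proposition \ref{hyperplan}.

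\textbf{Step 1 (global propagation).} First I would promote the local statement to a global one. For $i\in\{1,2\}$, set $H_i'=\{y'\in H_i \mid H_i\cap B(y',\epsilon)\subset \partial U\}$. The uniformity of $\epsilon$ makes $H_i'$ open in $H_i$; the closedness of $\partial U$ in $\C^n$ makes $H_i'$ closed; and $y\in H_i'$. Connectedness of the affine hyperplane $H_i$ then gives $H_i'=H_i$, so $H_i\subset\partial U$ entirely.

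\textbf{Step 2 (isotropic direction).} Next, I would exploit the Cartan decomposition from the proof of Proposition \ref{hyperplan}. Each $H_i$ is a limit of translates $y_n^{(i)}+k_n^{(i)}H_0$, and $H_0$ is degenerate for the Hermitian form of signature $(n-1,1)$, with one-dimensional kernel $\C(e_1+e_n)$ (the expanding Cartan direction). Hence there is an isotropic vector $v_i$ such that $H_i=y+v_i^\perp$. A degenerate complex hyperplane through a fixed point is uniquely determined by its null direction (since $v^\perp$ determines the hyperplane and $v$ up to scalar), so $H_1\ne H_2$ forces $v_1$ and $v_2$ to be linearly independent over $\C$. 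Moreover, pushing the dilation factor $a_n$ (which stretches $\C(e_1+e_n)$ by $\lambda_n\to\infty$) further in the argument of Proposition \ref{hyperplan} should place the entire complex isotropic line $y+\C v_i$ inside $\partial U$, for $i=1,2$.

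\textbf{Step 3 (contradiction).} The main obstacle is the final step, where the rigidity obtained above must be converted into a genuine contradiction. My approach would be to propagate the ``two-hyperplanes'' pathology: for $z\in H_1\cap B(y,\delta)$ with $z\notin H_2$, Step 1 still gives $H_1\subset\partial U$ through $z$, and Proposition \ref{hyperplan} furnishes at least one complex hyperplane $H(z)$ at $z$ with local piece in $\partial U$. Sending $z\to y$ transversally to $H_2$ and examining the limit of $H(z)$ should force $H(z)$ to depend non-trivially on the direction of approach. Combining this with the properness and cocompactness of the $\Gamma$-action on $U$, with the iterated family of isotropic lines produced in Step 2, and with the fact that $\Gamma$ permutes the (now globally) degenerate hyperplanes contained in $\partial U$, should yield the desired contradiction—most likely by showing that the complex $2$-plane $y+\C v_1\oplus\C v_2$ lies in $\partial U$ and iterating to cover a real neighborhood of $y$, contradicting $y\in\partial U$. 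This last step is where I expect the real work, and possibly a finer analysis of the approach sequences $\gamma_n\cdot x_n\to y$, to be required.
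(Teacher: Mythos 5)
There is a genuine gap here, and it stems from missing the mechanism that actually forces uniqueness. The paper's proof does not try to derive a contradiction from two boundary hyperplanes coexisting; it compares an arbitrary hyperplane $H$ with $H\cap B(y,\epsilon')\subset \partial U$ against the approximating family of \emph{interior} pieces $H_n\cap B(y_n,\epsilon)\subset U$ provided by the second half of Proposition \ref{hyperplan}. Two distinct complex affine hyperplanes through a common point are automatically transverse (their directions span $\C^n$), and transversality is stable under small perturbations; so if $H$ differed from the limit $H'$ of the $H_n$, then for $n$ large the disc $H_n\cap B(y_n,\epsilon)$, which lies in $U$, would have to meet $H\cap B(y,\epsilon')$, which lies in $\partial U$ --- impossible since $U\cap\partial U=\emptyset$. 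Every boundary hyperplane at $y$ therefore equals $H'$, whence uniqueness. Your proposal never exploits the fact that the approximating discs sit in $U$ rather than in $\partial U$; that is the only piece of information that rules out two transverse boundary hyperplanes, and without it no contradiction is available (the boundary of a general open set can perfectly well contain two transverse hyperplanes).

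Concretely: in Step 1, the openness of $H_i'$ does not follow from ``the uniformity of $\epsilon$'' --- from $H_i\cap B(y',\epsilon)\subset\partial U$ you only get $H_i\cap B(y'',\epsilon/2)\subset\partial U$ for nearby $y''$, and restoring the radius $\epsilon$ at $y''$ is exactly what the paper does in the \emph{following} proposition by invoking the uniqueness you are trying to prove (Proposition \ref{hyperplan} gives some $\epsilon$-disc at $y''$, and uniqueness identifies it with $H_i$); as written, Step 1 is circular. Step 3 is not an argument: $H_1\cup H_2\subset\partial U$ does not imply that the $2$-plane they span lies in $\partial U$, and no mechanism is offered to produce one. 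Finally, restricting in Step 2 to degenerate hyperplanes would at best yield uniqueness within that class, whereas the statement (and its later use) concerns arbitrary complex affine hyperplanes. The repair is short: discard Steps 1--3 and run the transversality argument directly against the interior discs of Proposition \ref{hyperplan}.
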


\begin{proof}
D'après la proposition \ref{hyperplan}, un tel $H$ existe.
Soit donc $H$ un hyperplan affine complexe contenant $y$ et tel que $H \cap B(y, \epsilon') \subset \partial U$. Soit d'autre par une suite $y_n \in U$ convergeant vers $y$, une suite $H_n$ d'hyperplans affines complexes contenant $y_n$ tels que $H_n \cap B(y_n, \epsilon) \subset U$, et supposons que $H_n$ converge vers $H'$, qui contient donc $y$. Supposons que $H'$ est différent de $H$. Alors $H'$ et $H$ sont transverses en $y$ (ce sont deux hyperplans complexes), et pour $n$ assez grand, $H_n \cap B(y_n, \epsilon)$ intersecte $H \cap B(y, \epsilon')$, ce qui est absurde puisque $H \cap B(y, \epsilon') \subset \partial U$ alors que $H_n \cap B(y_n, \epsilon) \subset U$. Donc $H = H'$. Par conséquent, $H$ est unique. 
\end{proof}

\begin{proposition}
L'ouvert $U$ est feuilleté par des copies parallèles d'un même hyperplan complexe, et $\Gamma_0$ préserve l'hyperplan vectoriel sous-jacent.
\end{proposition}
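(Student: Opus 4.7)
L'id�e est d'exploiter l'unicit� de l'hyperplan $H(y)$ �tablie � la proposition pr�c�dente pour montrer d'abord que chaque $H(y)$ est enti�rement inclus dans $\partial U$, puis que deux tels hyperplans sont toujours parall�les ou �gaux, et enfin d'en d�duire le feuilletage par un argument de connexit�.

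\emph{�tape 1 (extension globale des $H(y)$).} Je prouverais que $H(y) \subset \partial U$ pour tout $y \in \partial U$ en consid�rant $E = \{z \in H(y) : H(z) = H(y)\}$. La propri�t� d'extension locale de la proposition \ref{hyperplan} entra\^\i ne que $E$ est ouvert dans $H(y)$, et l'unicit� aux points limites (combin�e au fait que $\partial U$ est ferm�) montre que $E$ est aussi ferm�. Comme $H(y)$ est connexe et contient $y$, on en d�duit $E = H(y)$.

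\emph{�tape 2 (parall�lisme et invariance de la direction).} Supposons par l'absurde qu'il existe $y_1, y_2 \in \partial U$ tels que $H(y_1)$ et $H(y_2)$ soient distincts et non parall�les. Deux hyperplans affines complexes non parall�les de $\C^n$ s'intersectent en un sous-espace affine de codimension $2$, et par l'�tape 1 tout point $z$ de cette intersection appartient � $\partial U$; les deux hyperplans distincts contiendraient alors $z$ avec, chacun, un voisinage inclus dans $\partial U$, contredisant l'unicit� prouv�e � la proposition pr�c�dente. Tous les $H(y)$ admettent donc une direction commune, c'est-�-dire un hyperplan vectoriel $H_\infty \subset \C^n$. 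Comme $\Gamma$ pr�serve $\partial U$ et agit par transformations affines, on a $\gamma \cdot H(y) = H(\gamma \cdot y)$ pour tout $\gamma \in \Gamma$, ce qui impose $l(\gamma) H_\infty = H_\infty$: donc $\Gamma_0$ pr�serve $H_\infty$.

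\emph{�tape 3 (feuilletage et obstacle principal).} Pour $u \in U$ fix�, l'ensemble $S = \{v \in H_\infty : u+v \in U\}$ est ouvert dans $H_\infty$ et contient $0$; il est aussi ferm� car, si $v_n \in S$ converge vers $v \notin S$, alors $u+v \in \partial U$ et l'�tape 1 donne $(u+v)+H_\infty \subset \partial U$, d'o� $u+v_n \in (u+v)+H_\infty \subset \partial U$, en contradiction avec $u+v_n \in U$. La connexit� de $H_\infty$ donne alors $S = H_\infty$, soit $u+H_\infty \subset U$. L'obstacle principal est l'�tape 2: l'argument d'intersection ne fonctionne que parce que les $H(y)$ sont des \emph{hyperplans}, ce qui refl�te la ``discompacit� $2$'' tr�s particuli�re de l'action de $\U(n-1,1)$ (des sous-espaces de codimension sup�rieure ne s'intersecteraient pas g�n�riquement, et l'argument s'effondrerait).
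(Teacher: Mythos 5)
Votre preuve est correcte et suit essentiellement la m\^eme d\'emarche que celle de l'article : un argument d'ouvert-ferm\'e dans $H(y)$ pour l'\'etendre \`a tout $\partial U$, la contradiction entre l'intersection de deux hyperplans non parall\`eles et l'unicit\'e \'etablie \`a la proposition pr\'ec\'edente, puis l'observation que $x+H_0$ rencontrant $\partial U$ forcerait $x\in\partial U$. Rien \`a signaler.
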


\begin{proof}
Soit $y\in \partial U$, et $H$ l'unique hyperplan contenant $y$ tel que $H\cap B(y,\epsilon) \subset \partial U$. Montrons que tout $H$ est inclus dans $\partial U$. Soit $A$ l'ensemble des $z \in H$ tels que $H\cap B(z,\epsilon) \subset \partial U$. $A$ est clairement fermé, et contient $y$. Soit $z \in A$ et $z' \in H\cap B(z, \epsilon/2)$. Alors $H\cap B(z', \epsilon/2) \subset \partial U$. D'après la proposition précédente, $H$ est l'unique hyperplan contenant $z'$ et tel que $H\cap \partial U$ est ouvert, et par conséquent, $H\cap B(z', \epsilon) \subset \partial U$.
D'où $B(z, \epsilon/2) \subset A$. $A$ est ouvert et fermé, donc $A = H$ et $H\subset \partial U$.

Soient $H$ et $H'$ deux hyperplans affines contenus dans $\partial U$. S'ils ne sont pas parallèles, ils s'intersectent transversalement en un point $y\in \partial U$, ce qui contredit l'unicité de l'hyperplan contenu dans $\partial U$ et passant par $y$. Donc $H$ et $H'$ sont parallèles.

Soit $H_0$ la partie linéaire des hyperplans contenus dans le bord de $U$. Il est clair que $\Gamma_0$ préserve $H_0$. Soit maintenant $x \in U$. Supposons que $H_0 +x$ n'est pas inclus dans $U$. Alors il existe $y \in (H_0+x) \cap \partial U$. Mais alors $(H_0 + y) \in \partial U$, et en particulier $x\in \partial U$, ce qui est absurde. Donc $U$ est feuilleté par des copies parallèles de $H_0$.
\end{proof}

L'ouvert $U$ fibre donc au dessus d'un ouvert $\Omega$ de $H_0 \backslash \C^n \sim \C$, et l'action de $\Gamma$ sur $\C^n$ induit une action sur $\C$ par transformations affines complexes, qui préserve $\Omega$. On veut prouver que $\Omega = \C$.

Notons $\overline{\rho}$ la représentation de $\Gamma$ dans $\Aff(\C)$ induite par passage au quotient par $H_0$. Remarquons le fait suivant :

\begin{lemme}
Si l'image de $\Gamma$ par $\overline{\rho}$ est discrète, alors elle est virtuellement abélienne.
\end{lemme}

\begin{proof}
Cela est vrai de tous les sous-groupes de $\Aff(\C)$, mais, par commodité, nous le prouvons uniquement pour des sous-groupes de type fini. Remarquons que $\overline{\rho}(\Gamma)$ est de type fini, puisque c'est l'image du groupe fondamental d'une variété compacte. 

Soit $N$ un sous-groupe de type fini de $\Aff(\C)$. Supposons que $N$ n'est pas virtuellement abélien. $N$ n'est donc pas virtuellement inclus dans le sous-groupe des translations. La partie linéaire $N_0$ de $N$ est donc infinie. Si $N_0$ est inclus dans le sous-groupe des rotations, comme $N_0$ est de type fini, il est monogène. Il existe donc une rotation affine dans $N$ d'angle irrationnel, et $N$ n'est donc pas discret.
Si $N_0$ n'est pas inclus dans le sous-groupe des rotations, il existe un élément $n$ de $N$ de la forme $z \mapsto az + b$, avec $|a| < 1$. Quitte à conjuguer le groupe $N$, on peut supposer que le point fixe de $n$ est $0$ (autrement dit, que $b=0$).
Comme $N$ n'est pas abélien, $N$ contient un élément $n'$ qui n'a pas $0$ comme point fixe (i.e. $n' : z\mapsto a'z+b', \ b'\neq 0$). Alors $n^k n' n^{-k}$ envoie $z$ sur $a' z + a^k b$. La suite $n^k n' n^{-k}$ converge sans être stationnaire, et $\Gamma$ n'est donc pas discret.
\end{proof}

Montrons que si $\Omega \neq \C$, $\overline{\rho}(\Gamma)$ ne peut pas être discret. Si $\overline{\rho}(\Gamma)$ est discret, il est virtuellement abélien. S'il est virtuellement inclus dans le sous-groupe des translations, $\Omega/\overline{\rho}(\Gamma)$ est, à revêtement fini près, un ouvert d'un tore. Comme $\Omega/\overline{\rho}(\Gamma)$ doit être compact, $\Omega = \C$.
Sinon, $\overline{\rho}(\Gamma)$ est virtuellement inclus dans le stabilisateur d'un point, et stabilise donc une orbite finie. Mais alors, $\Gamma$ stabilise une famille finie d'hyperplans affines complexes de $\C^n$, et fixe donc virtuellement un hyperplan affine complexe. Cela est impossible d'après le théorème suivant :

\begin{CiteThm}[Goldman, Hirsch, \cite{GoldmanHirsch84}]
Soit $M$ une variété compacte munie d'une structure affine dont l'holonomie préserve un sous-espace affine strict. Alors $M$ ne possède pas de forme volume parallèle.
\end{CiteThm}

Nous savons donc que l'adhérence de $\overline{\rho}(\Gamma)$ dans $\Aff(\C)$ contient un sous-groupe à 1 paramètre. Ce sous-groupe à 1 paramètre stabilise le bord de $\Omega$, qui est donc une réunion d'orbites de ce sous-groupe. Les sous-groupes à 1 paramètre du groupe affine sont les rotations fixant un point, les homothéties fixant un point, les ``homothéties-rotations'' fixant un point ($z\to e^{t(a+ib)} z +c$), et les translations le long d'une direction. $\partial \Omega$ est donc soit un point, soit une réunion de cercles concentriques, de droites parallèles, de demi-droites ayant même origine, où de spirales logarithmiques s'enroulant autour d'un même point. Sachant que $\overline{\rho}(\Gamma)$ doit agir cocompactement sur l'une des composantes connexes du complémentaire de $\partial \Omega$, on se convainc facilement que $\partial \Omega$ ne peut être qu'un point ou une droite (dans tous les autres cas, le stabilisateur du bord de $\Omega$ est exactement le sous-groupe à 1 paramètre, et il n'agit pas cocompactement). Mais alors $\Gamma$ stabilise soit un hyperplan complexe, soit un hyperplan réel de $\C^n$, ce qui, encore une fois, contredit le théorème de Goldman et Hirsch.

On conclut donc que $\Omega = \C$, et que $U = \C^n$.

\begin{rmq}
La preuve ne repose que sur le fait que $\U(n-1,1)$ est en quelque sorte de \emph{discompacité complexe 1}, dans le sens où son action fait dégénérer les ellipsoïdes sur des hyperplans complexes. La même preuve permet donc d'obtenir le théorème \ref{ThmHoloPlat}. En effet, tous les éléments de $\SO(3,\C)$ se décomposent sous la forme $k a k'$, où $k \in \SO(3,\R)$ et où $a$ est conjugué à la matrice
$$
\left( \begin{array}{ccc}
\lambda & 0 & 0 \\
0 & 1 & 0 \\
0 & 0 & \lambda^{-1}
\end{array} \right) \quad, $$
avec $\lambda$ réel supérieur ou égal à $1$.
\end{rmq}

\section{Groupes de Lie de rang 1 et leur métrique de Killing} \label{GroupeLie}
 
Nous nous intéressons dans cette section à une autre famille de géométries. Considérons un groupe de Lie semi-simple $L$, connexe et simplement connexe, d'algèbre de Lie $\l$. La forme de Killing est la forme quadratique définie sur $\l$ par
$$\kappa_\l(u, v) = \Tr(\ad_u \ad_v).$$
La forme de Killing est non dégénérée, de signature $(\dim L - \dim K, \dim K)$ où $K$ est un sous-groupe compact maximal de $L$, et préservée par l'action adjointe de $L$. On appelle \emph{métrique de Killing} sur $L$ (et on note $\kappa_L$) la métrique pseudo-riemannienne obtenue en étendant $\kappa_\l$ à $L$ par invariance à gauche. Comme $\kappa_\l$ est invariante par l'action adjointe, la métrique de Killing est aussi invariante à droite. Par conséquent, l'application $g \mapsto g^{-1}$ préserve $\kappa_L$. On en déduit aisément que $(L,\kappa_L)$ est un espace pseudo-riemannien symétrique au sens de Cartan. La composante connexe de l'identité dans son groupe d'isométries s'identifie à $L\times L$ (à revêtement près).

Une variété $M$ munie d'une métrique pseudo-riemannienne localement isométrique à la métrique de Killing de $L$ possèdera donc une $(L\times L, L)$-structure, et le théorème \ref{ThmGrpLie} concerne ces variétés pseudo-riemanniennes. Dans la suite, pour éviter la terminologie un peut lourde de ``$(L\times L, L)$-structure'', on posera $G = L\times L$.

\subsection{Exemples}
Le plus petit groupe de Lie simple non compact est $\PSL(2,\R)$, de dimension 3. Sa métrique de Killing est lorentzienne, et, comme son groupe d'isométries est de dimension 6, elle est de courbure constante. Bien qu'il ne soit pas simplement connexe, $\PSL(2,\R)$ muni de sa métrique de Killing est appelé \emph{espace anti-de Sitter} de dimension 3 (analogue lorentzien de l'espace hyperbolique de dimension 3). Dans ce cas, le problème de la complétude est résolu par le théorème de Klingler, qui généralise le théorème de Carrière.

\begin{CiteThm}[Klingler, \cite{Klingler96}]
Soit $M$ une variété compacte munie d'une métrique lorentzienne de courbure constante. Alors $M$ est géodésiquement complète.
\end{CiteThm}

Les variétés anti-de Sitter compactes de dimension 3 sont donc des quotients de $\widetilde{\PSL(2,\R)}$ par un sous-groupe discret de $\widetilde{\PSL(2,\R)} \times \widetilde{\PSL(2,\R)}$ agissant proprement, librement et cocompactement sur $\widetilde{\PSL(2,\R)}$. D'après un théorème de Kulkarni et Raymond (\cite{KulkarniRaymond85}), ce sont en fait des quotients d'un revêtement fini de $\PSL(2,\R)$.\\

$\PSL(2,\C)$ est également un groupe de Lie simple de rang 1. Vu comme groupe de Lie réel, il est isomorphe à la composante neutre de $\PSO(3,1)$, de dimension 6, et sa métrique de Killing est de signature (3,3). Mais on peut aussi définir sur $\PSL(2,\C)$ une métrique de Killing complexe, donnée dans l'algèbre de Lie tangente par $$K^\C(u,v) = \Tr^\C(\ad_u \ad_v).$$
Cette métrique est une \emph{métrique riemannienne-holomorphe} (i.e. une section holomorphe partout non-dégénérée du fibré $\Sym^2 \T^*\PSL(2,\C)$). La notion de courbure se généralise pour de telles métriques, et celle-ci est de courbure constante non nulle.
$\PSL(2,\C)$ muni de sa métrique de Killing est donc un analogue complexe de l'espace anti-de Sitter de dimension 3. Toutefois, la preuve de Klingler ne se généralise pas dans ce cas-là. Elle repose en effet, comme le théorème de Carrière, sur le fait que les isométries lorentziennes sont de discompacité 1. Or, l'action adjointe de $\PSL(2,\C)$ sur lui-même est de discompacité 2.

\subsection{$(G,L)$-structures complètes standards et non-standards} \label{Kassel}

Dans toute la suite, $L$ désigne un groupe de Lie semi-simple de rang 1. On sait, depuis les travaux de Borel (\cite{Borel63}), qu'un tel groupe admet toujours un réseau cocompact $\Gamma$. Le quotient de $L$ par l'action de $\Gamma$ à droite fournit donc un exemple de $(G,L)$-variété compacte et complète. Une telle structure est appelée standard\footnote{Nous utilisons ce terme dans un sens un peu différent de celui de Kulkarni et Raymond dans \cite{KulkarniRaymond85}.}.

D'après le théorème de rigidité de Mostow, si $\l$ n'a pas de facteur isomorphe à $\sl(2,\R)$, toute déformation de $\Gamma$ dans $L$ est obtenue par conjugaison, et ne modifie pas la $(G,L)$-structure. En revanche, lorsque $L = \SO(n,1)$ ou $\SU(n,1)$, on peut déformer $\Gamma$ de façon non triviale dans $L\times L$ (\cite{Kobayashi98}). Plus précisément, tout morphisme $u : \Gamma \to L$ fournit un plongement $\rho_u : \Gamma \to L\times L$ défini par
$$\rho_u : \gamma \mapsto (u(\gamma), \gamma).$$

D'après le principe d'Ehresman-Thurston, pour $u$ suffisamment proche du morphisme trivial, $\rho_u$ est l'holonomie d'une $(G,L)$-structure sur la variété $M$. Goldman, dans le cas de $\SL(2,\R)$ (\cite{Goldman85}), Ghys, pour $\SL(2,\C)$ (\cite{Ghys95}), et Kobayashi dans le cas général (\cite{Kobayashi98}) ont remarqué que, pour $u$ petit, $\rho_u(\Gamma)$ continue d'agir proprement, discrètement et cocompactement sur $L$. Autrement dit, lorsqu'on déforme un peu une $(G,L)$-structure complète standard, on obtient toujours une $(G,L)$-structure complète, mais cette fois-ci non-standard.

Dans la lignée des travaux de Ghys (\cite{Ghys95}), Kulkarni et Raymond (\cite{KulkarniRaymond85}), Kobayashi (\cite{Kobayashi93}, \cite{Kobayashi98}), Salein (\cite{Salein00}), les travaux de Kassel, Guéritaud, Guichard et Wienhard décrivent très précisément les $(G,L)$-structures complètes.

\begin{CiteThm}[Kassel, \cite{Kassel08}]
Soit $L$ un groupe de Lie semi-simple de rang $1$, et $\Gamma$ un sous-groupe de $L\times L$ agissant proprement, discrètement et cocompactement sur $L$. Notons $\rho_1$ et $\rho_2$ les projections de $\Gamma$ sur chaque coordonnée. Alors il existe $i\in \{1,2\}$ tel que $\rho_i$ est injective et $\rho_i(\Gamma)$ est un réseau cocompact de $L$.
\end{CiteThm}
Autrement dit, tous les quotients compacts de $L$ par un sous-groupe de $L\times L$ sont de la forme $\rho_u(\Gamma)\backslash L$, où $\Gamma$ est un réseau de $L$ et $\rho_u$ est un morphisme comme décrit précédemment. Ce théorème précise un résultat de Kobayashi (\cite{Kobayashi93}) qui affirmait que l'une des projections était injective.\\

Réciproquement, Kassel donne dans \cite{KasselThese}, pour le cas particulier où $L = \SL(2,\R)$, un critère pour que la représentation $\rho_u$ agisse proprement et discrètement. Ce critère est étendu par Guéritaud et Kassel aux groupes $\SO(n,1)$ (\cite{GueritaudKassel}), puis par Guéritaud, Kassel, Guichard et Wienhard à tous les groupes de Lie semi-simples (\cite{GGKW}).

\begin{CiteThm}[Guéritaud, Guichard, Kassel, Wienhard]
Soit $L$ un groupe de Lie semi-simple de rang $1$, $K$ un sous-groupe compact maximal. $L$ agit sur $L/K$ en préservant une métrique riemannienne symétrique. Soit $\Gamma$ un réseau dans $L$ et $u$ un morphisme de $\Gamma$ dans $L$. Alors $\rho_u(\Gamma)$ agit proprement discontinument sur $L$ si et seulement s'il existe un point $x\in L/K$ et deux constantes $C$ et $C'$, $C<1$ telles que 
$$\forall \gamma \in \Gamma, \d(x, u(\gamma) \cdot x) \leq C \d(x, \gamma \cdot x) + C'.$$
\end{CiteThm}

\begin{CiteCoro}[Guéritaud, Guichard, Kassel, Wienhard]
Soit $L$ un groupe de Lie semi-simple de rang $1$, et $M$ une variété compacte de même dimension que $L$. Alors, dans l'espace $\mathrm{Def}_{(G,L)}(M)$, le domaine des $(G,L)$-structures complètes forme un ouvert.
\end{CiteCoro}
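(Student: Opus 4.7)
Il s'agit de montrer que si $(\dev_0, \rho_0) \in \mathrm{Def}_{(G,L)}(M)$ correspond � une structure compl�te, alors tout point proche dans l'espace de d�formation correspond aussi � une structure compl�te. Par le principe d'Ehresmann-Thurston, il suffit de raisonner au niveau des holonomies: toute repr�sentation $\rho : \pi_1(M) \to L \times L$ suffisamment proche de $\rho_0$ devrait induire une $(G,L)$-structure compl�te.

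D'apr�s le th�or�me de Kassel rappel� ci-dessus, $\rho_0 = (\rho_0^{(1)}, \rho_0^{(2)})$ et, quitte � �changer les deux facteurs, $\rho_0^{(1)} : \pi_1(M) \to L$ est injective d'image un r�seau cocompact $\Gamma_0 \subset L$. Posons $u_0 = \rho_0^{(2)} \circ (\rho_0^{(1)})^{-1}$. Le th�or�me de Gu�ritaud-Guichard-Kassel-Wienhard fournit alors $x \in L/K$, $C_0 < 1$ et $C_0'$ tels que
$$\forall \gamma \in \Gamma_0, \quad d(x, u_0(\gamma) \cdot x) \leq C_0\, d(x, \gamma \cdot x) + C_0'.$$

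\textbf{�tape 1 : stabilit� du facteur discret.} Je commencerais par �tablir que pour $\rho = (\rho^{(1)}, \rho^{(2)})$ proche de $\rho_0$, la projection $\rho^{(1)}$ reste injective d'image un r�seau cocompact de $L$. Cela d�coule de la stabilit� locale classique des r�seaux cocompacts dans les groupes de Lie semi-simples de rang $1$ : rigidit� de Mostow lorsque $L \neq \PSL(2,\R)$, et th�orie de Teichm�ller pour $L = \PSL(2,\R)$. On d�finit alors $u = \rho^{(2)} \circ (\rho^{(1)})^{-1}$.

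\textbf{�tape 2 : ouverture du crit�re de contraction.} Il s'agit de v�rifier que $u$ satisfait encore la condition de contraction avec une constante $C < 1$ pour $\rho$ proche de $\rho_0$. Fixons un syst�me g�n�rateur fini $S$ de $\pi_1(M)$. Les d�placements $d(x, \rho^{(i)}(s) \cdot x)$ pour $s \in S$ d�pendent continument de $\rho$, et l'in�galit� triangulaire donne $d(x, \rho^{(2)}(\gamma) \cdot x) \leq |\gamma|_S \cdot \max_{s \in S} d(x, \rho^{(2)}(s) \cdot x)$. D'autre part, d'apr�s le lemme de Milnor-�varc appliqu� au r�seau cocompact $\rho^{(1)}(\pi_1(M))$, il existe des constantes $a(\rho) > 0$ et $b(\rho) \geq 0$, d�pendant continument de $\rho$, telles que $d(x, \rho^{(1)}(\gamma) \cdot x) \geq a(\rho) |\gamma|_S - b(\rho)$. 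En combinant ces deux estim�es avec la contraction stricte � $\rho_0$, on obtient pour $\rho$ proche une in�galit� de la forme voulue avec $C < 1$.

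\textbf{Conclusion.} Le crit�re de Gu�ritaud-Guichard-Kassel-Wienhard assure alors que $\rho(\pi_1(M))$ agit proprement discontin�ment sur $L$, et cette action est cocompacte puisque $\rho^{(1)}(\pi_1(M))$ agit d�j� cocompactement sur $L$ par translations � gauche. La $(G,L)$-structure associ�e � $\rho$ est donc kleinienne avec image �gale � $L$ tout entier, c'est-�-dire compl�te.

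\textbf{Principal obstacle.} Le point technique d�licat est l'�tape 2 : il faut contr�ler uniform�ment la constante $C$ sur l'infinit� des �l�ments de $\Gamma$, � partir de la donn�e finie sur les g�n�rateurs, tout en conservant la stricte contraction $C < 1$. La continuit� ponctuelle seule des d�placements ne suffit pas, et il est essentiel d'exploiter la structure quasi-isom�trique fournie par la cocompacit� du r�seau $\rho^{(1)}(\pi_1(M))$.
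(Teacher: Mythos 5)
Un point pr\'ealable : l'article ne d\'emontre pas ce corollaire, qui est cit\'e de \cite{GGKW} ; il n'y a donc pas de preuve interne \`a laquelle comparer la v\^otre. Votre r\'eduction est n\'eanmoins la bonne : par Ehresmann--Thurston et le th\'eor\`eme de Kassel, tout revient \`a montrer que la condition de contraction du crit\`ere de Gu\'eritaud--Guichard--Kassel--Wienhard est ouverte en $\rho_0$, et votre \'etape 1 (stabilit\'e du facteur r\'eseau) est correcte : c'est l'ouverture de l'ensemble des repr\'esentations discr\`etes, fid\`eles et cocompactes, due \`a Weil.

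La lacune r\'eelle est \`a l'\'etape 2, exactement l\`a o\`u vous la localisez vous-m\^eme, mais elle est plus profonde que votre esquisse ne le sugg\`ere. Les deux seules estim\'ees que vous combinez ne donnent que $C = \max_{s\in S} d(x,\rho^{(2)}(s)\cdot x)/a(\rho)$, constante qui n'a aucune raison d'\^etre $<1$ (elle ne l'est d\'ej\`a pas en $\rho=\rho_0$ d\`es que $C_0$ est proche de $1$). La correction naturelle --- comparer $u(\gamma)$ \`a $u_0(\gamma)$ par t\'elescopage le long d'une \'ecriture $\gamma=s_1\cdots s_n$ --- \'echoue \'egalement : chaque terme du t\'elescopage est de la forme $d(z,\,u_0(s)^{-1}u(s)\cdot z)$ pour un point $z\in L/K$ qui part \`a l'infini, et la fonction de d\'eplacement d'une isom\'etrie de $L/K$ n'est pas born\'ee. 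C'est pr\'ecis\'ement pour contourner cette difficult\'e que \cite{GGKW} reformule le crit\`ere en termes d'applications $u$-\'equivariantes lipschitziennes $f: L/K\to L/K$ de constante $<1$ et \'etablit l'ouverture par une construction de moyenne d'applications \'equivariantes ; ce n'est pas un simple argument de perturbation sur les g\'en\'erateurs. Signalons enfin deux points laiss\'es implicites dans votre conclusion : la cocompacit\'e de l'action de $\rho(\pi_1(M))$ sur $L$ ne d\'ecoule pas directement de celle de $\rho^{(1)}(\pi_1(M))$ par translations \`a gauche (l'action m\'elange les deux facteurs ; on l'obtient par le crit\`ere de dimension cohomologique de Kobayashi \cite{Kobayashi93}), et l'identification de la structure d\'eform\'ee avec le quotient complet $\rho(\pi_1(M))\backslash L$ requiert la partie unicit\'e du th\'eor\`eme d'Ehresmann--Thurston rappel\'e \`a la section \ref{GXDeformation}.
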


Ces théorèmes fournissent une relativement bonne description des $(G,L)$-structures complètes. En revanche, hormis dans le cas de $\SL(2,\R)$, on ne sait pas s'il existe des $(G,L)$-variétés compactes non complètes. Le théorème \ref{ThmGrpLie} donne une réponse qui, bien que partielle, permet de compléter la description des $(G,L)$-structures complètes. Le théorème \ref{ThmGrpLie} a en effet pour conséquence:

\begin{corollaire} \label{Fermeture}
Soit $L$ un groupe de Lie semi-simple de rang 1, et $M$ une variété compacte de même dimension que $L$. Alors, dans l'espace $\mathrm{Def}_{(G,L)}(M)$, le domaine des $(G,L)$-structures complètes forme un fermé.
\end{corollaire}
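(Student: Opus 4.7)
The strategy is to combine Theorem \ref{ThmGrpLie} with a compactness argument: I would show that the limit of a sequence of complete structures remains Kleinian, whereupon Theorem \ref{ThmGrpLie} forces it to be complete. Let $[(\dev_n, \rho_n)] \to [(\dev_\infty, \rho_\infty)]$ be a convergent sequence in $\mathrm{Def}_{(G,L)}(M)$, with each $\dev_n \colon \tilde{M} \to L$ a global diffeomorphism. The limit $\dev_\infty$ is a local diffeomorphism, so $U_\infty := \dev_\infty(\tilde{M})$ is an open $\rho_\infty(\pi_1(M))$-invariant subset of $L$. The aim is to prove $U_\infty = L$.

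The first preparatory step is cocompactness of the action of $\rho_\infty(\pi_1(M))$ on $U_\infty$. Fixing a compact fundamental domain $F \subset \tilde{M}$ for the deck action, the relation $\rho_n(\pi_1(M)) \cdot \dev_n(F) = L$ holds for every $n$, because each $\dev_n$ is a diffeomorphism. Passing to the limit in the topology of $C^1$-convergence on compact sets, $\dev_\infty(F)$ is a compact subset of $U_\infty$ whose $\rho_\infty(\pi_1(M))$-orbit covers $U_\infty$.

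The heart of the proof is then to show that $(\dev_\infty, \rho_\infty)$ is Kleinian, i.e. that $\dev_\infty \colon \tilde{M} \to U_\infty$ is a covering and $\rho_\infty(\pi_1(M))$ acts properly and freely on $U_\infty$. Once this is done, Selberg's lemma provides a torsion-free finite-index subgroup whose quotient of $U_\infty$ is a compact Kleinian $(G,L)$-manifold, and Theorem \ref{ThmGrpLie} forces $U_\infty = L$. To establish Kleinianity of the limit I would invoke the structural description of complete $(G,L)$-structures due to Kassel and Guéritaud-Guichard-Kassel-Wienhard: up to exchanging the two factors of $G = L \times L$, each $\rho_n$ can be written $\rho_n(\gamma) = (u_n \circ \iota_n(\gamma), \iota_n(\gamma))$, where $\iota_n \colon \pi_1(M) \hookrightarrow L$ is an embedding onto a uniform lattice $\Gamma_n \subset L$ and $u_n \colon \Gamma_n \to L$ satisfies the GGKW contraction estimate with constants $C_n < 1$ and $C'_n$. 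By rigidity of uniform lattice embeddings in rank-one semi-simple Lie groups, the limit $\iota_\infty$ remains a uniform lattice embedding, and $u_\infty$ inherits the inequality in its relaxed form $C_\infty \leq 1$; combined with the cocompactness from the previous paragraph, this should suffice to make the action of $\rho_\infty(\pi_1(M))$ proper and free on $U_\infty$ and to make $\dev_\infty$ a covering onto $U_\infty$.

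The main obstacle is precisely the possibility that $C_n \to 1$: the strict GGKW inequality may collapse in the limit, so that $\rho_\infty$ need not act properly on all of $L$, but only on the possibly strict open subset $U_\infty$. This bad-limit scenario --- a compact Kleinian structure whose developing image is a proper $\rho_\infty(\pi_1(M))$-divisible open subset of $L$ --- is exactly what Theorem \ref{ThmGrpLie} excludes, so the corollary follows at once as soon as Kleinianity of the limit is secured.
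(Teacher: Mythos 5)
Your overall architecture is the right one --- reduce to showing that a limit of complete structures is Kleinian, then invoke Theorem \ref{ThmGrpLie} --- but the way you propose to secure Kleinianity has a genuine gap. The sentence ``combined with the cocompactness from the previous paragraph, this should suffice to make the action of $\rho_\infty(\pi_1(M))$ proper and free on $U_\infty$ and to make $\dev_\infty$ a covering onto $U_\infty$'' is precisely the content that needs proving, and the ingredients you assemble do not yield it. Knowing that $\iota_\infty$ is still a uniform lattice embedding and that $u_\infty$ satisfies the relaxed inequality with $C_\infty\leq 1$ says nothing about $\dev_\infty$ being a covering onto its image: that is a statement about the developing map, not about the holonomy, and there exist $(G,X)$-structures whose holonomy is discrete and faithful but whose developing map is wildly non-injective. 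Nor does it give properness of the action on the proper open set $U_\infty$: the criterion of Gu\'eritaud--Guichard--Kassel--Wienhard with $C=1$ is exactly the borderline case in which properness on $L$ can fail, and it provides no handle on properness restricted to a strict invariant open subset. Finally, this route makes the corollary depend on the closedness of the set of discrete faithful cocompact representations and on the Kassel--GGKW classification, whereas the statement can be proved independently of those results (only Corollary \ref{CoroConnexe} needs them).

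The missing idea is much more elementary: \emph{injectivity of the developing map passes to the limit}. Each $\dev_n$ is a global diffeomorphism, hence injective. If $\dev_\infty(x_1)=\dev_\infty(x_2)=y$ with $x_1\neq x_2$, choose disjoint relatively compact neighbourhoods $U_1$ and $U_2$ of $x_1$ and $x_2$ sent diffeomorphically by $\dev_\infty$ onto a common neighbourhood $V$ of $y$; by $C^1$-convergence on compact sets, $\dev_n(U_1)$ and $\dev_n(U_2)$ still intersect for $n$ large, contradicting the injectivity of $\dev_n$. An injective developing map makes the structure Kleinian automatically: $\dev_\infty$ is then a diffeomorphism onto its image, and it conjugates the deck action of $\pi_1(M)$ on $\tilde{M}$, which is properly discontinuous and free, to the action of $\rho_\infty(\pi_1(M))$ on that image. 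Theorem \ref{ThmGrpLie} then applies directly. Your first paragraph on cocompactness is correct (it in fact follows from equivariance alone, without passing to any limit) but becomes unnecessary once injectivity of the limit is in hand.
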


\begin{proof}
D'après le théorème \ref{ThmGrpLie}, il suffit de prouver qu'une limite de structures complètes est kleinienne. Considérons une suite de couples $(\dev_n, \rho_n)$ convergeant vers $(\dev, \rho)$, et telle que $\dev_n$ est un difféomorphisme de $\tilde{M}$ dans $L$. Alors en particulier $\dev_n$ est injective. Montrons que cette propriété passe à la limite.

Supposons par l'absurde qu'il existe deux points $x_1$ et $x_2$ distincts de $\tilde{M}$ tels que $\dev(x_1) = \dev(x_2) = y$. Alors, comme $\dev$ est un difféomorphisme local, il existe un voisinage $V$ de $y$ relativement compact, et deux voisinages relativement compacts $U_1$ et $U_2$ de $x_1$ et $x_2$ disjoints tels que $\dev(U_1) = \dev(U_2) = V$. Pour $n$ assez grand, $\dev_n$ est très proche de $\dev$ sur $U_1$ et $U_2$, et $\dev_n(U_1)$ et $\dev_n(U_2)$ continuent à s'intersecter, ce qui contredit l'injectivité de $\dev_n$.

L'application développante à la limite est donc injective, et la $(G,L)$-structure correspondante est kleinienne. D'après le théorème \ref{ThmGrpLie}, elle est donc complète.

\end{proof}

D'après le théorème précédent, l'ensemble des $(G,L)$-structures complètes est un ouvert. D'après ce corollaire, c'est aussi un fermé. On en déduit le corollaire \ref{CoroConnexe}.

\begin{rmq}
Nos théorèmes, ainsi que le corollaire \ref{Fermeture}, sont indépendants des résultats de Kassel et co-auteurs. Seul le corollaire \ref{CoroConnexe} utilise conjointement le théorème \ref{ThmGrpLie} et la caractérisation des structures complètes.
\end{rmq}

\subsection{Preuve du théorème \ref{ThmGrpLie}}

La stratégie de la preuve du théorème \ref{ThmGrpLie} est similaire à celle du théorème \ref{ThmAffine}. On raisonne par l'absurde et on considère $U$ un ouvert strict de $L$, muni d'une action discrète, propre, libre et cocompacte d'un sous-groupe $\Gamma$ de $G = L\times L$. Ce domaine $U$ possède un bord non vide $\partial U$, où l'action de $\Gamma$ n'est plus propre. 

Comme dans la preuve du théorème \ref{ThmAffine}, on déduit du lemme \ref{DgnBord} que le bord de $U$ contient des compacts dégénérés sous l'action de $\Gamma$. Le recours à la \emph{décomposition de Cartan} de $L$ permet de prouver que ces compacts dégénèrent sur des morceaux de sous-espaces paraboliques, qui joueront le rôle que jouaient précédemment les hyperplans complexes.

Nous prouverons ensuite que ces sous-espaces paraboliques contenus dans le bord sont en fait des translations à droite (ou à gauche) d'un même sous-groupe parabolique, et nous en déduirons que l'ouvert $U$ fibre au dessus d'un domaine de $L/P$. Le point clé sera de montrer qu'à l'instar de deux hyperplans complexes dans $\C^n$, deux sous-espaces paraboliques qui ne sont pas parallèles s'intersectent toujours transversalement. Nous aurons besoin pour cela de rappeler quelques résultats classiques sur la structure des sous-groupes paraboliques d'un groupe de Lie de rang 1.

Pour finir, nous montrerons que le complémentaire de ce domaine de $L/P$ doit contenir au moins deux points, puis que l'action de $\Gamma$ ne peut pas être discrète, ce qui contredira le fait qu'elle soit totalement discontinue sur un ouvert.\\

\subsubsection{Décomposition de Cartan}

Rappelons que pour tout sous-groupe compact maximal $K$ de $L$, il existe une involution de Cartan $\theta$ préservée par $K$, c'est-à-dire un morphisme d'algèbre de Lie de $\l$ dans $\l$ tel que $\theta^2 = \Id$ et tel que $\theta$ commute avec l'action adjointe de $K$. La forme bilinéaire $B_\theta (., .)= K_\l(., -\theta .)$ est définie positive, et s'étend donc en une métrique riemannienne invariante à droite sur $L$, qui est également préservée par l'action à gauche de $K$. Dans toute la suite, lorsque nous utilisons des notions métriques sur $L$ ou $\l$, nous nous référons implicitement à la métrique associée à une involution de Cartan fixée, préservée par un sous-groupe compact maximal $K$ fixé. Bien qu'il ne soit pas toujours nécessaire d'avoir recours à une métrique aussi spécifique, cela simplifiera grandement les énoncés.

Comme $L$ est de rang 1, il existe un élément $a\in \k^\bot$ dont l'action adjointe sur $\l$ est diagonalisable, et tel que tout élément $g$ de $L$ se décompose sous la forme 
$$g=k_1 \exp(ta) k_2,$$
avec $t \geq 0$ et $k_1, k_2\in K$.
De plus, un tel $t$ est unique. Il s'agit, dans le cas particulier du rang 1, de la décomposition de Cartan de l'élément $g$ (pour plus de précisions sur la structure des groupes de Lie semi-simples, on consultera par exemple \cite{Knapp96}).

En outre, la diagonalisation de $\ad_a$ donne une décomposition de $\l$ sous la forme

$$\l = \m \oplus \R a \oplus \n_+ \oplus \n_-,$$

où $\n_+$ (resp. $\n_-$) est la somme des sous-espaces propres de $\ad_a$ correspondant à des valeurs propres strictement positives (resp. strictement négatives), et où $\m = \k \cap \com(a)$.

Alors, $\p_0 = \m \oplus \R a \oplus \n_+$ forme une sous-algèbre de Lie résoluble de $\l$. Les sous-algèbres de Lie conjuguées à $\p_0$ par l'action adjointe sont appelées sous-algèbres de Lie paraboliques
\footnote{Cette définition \emph{ad hoc} est plutôt celle d'une sous-algèbre de Lie parabolique minimale. Mais en rang 1, toutes les sous-algèbres paraboliques sont minimales et conjuguées.}.
Le stabilisateur sous l'action adjointe d'une sous-algèbre de Lie parabolique $\p$ est un sous-groupe de Lie fermé de $L$, d'algèbre de Lie tangente $\p$, appelé sous-groupe parabolique. Nous appellerons \emph{sous-espace parabolique} de $L$ une sous-variété de la forme $Pb$, où $P$ est un sous-groupe parabolique. Enfin, nous appellerons \emph{disque parabolique de rayon $\epsilon$} une sous-variété de la forme
$$D_\p(y,\epsilon) = \exp(\p \cap B_\epsilon) y$$
où $B_\epsilon$ désigne la boule fermée de centre $0$ de rayon $\epsilon$ dans $\l$ (pour le produit scalaire $B_\theta$).

\begin{lemme} \label{DisqueParabolique}
Il existe $\epsilon>0$ tel que pour tout point $y\in \partial U$, le bord de $U$ contient un disque parabolique centré en $y$ de rayon $\epsilon$. Ce disque est de plus une limite de disques paraboliques contenus dans $U$, dans le sens où il existe une suite $y_n$ de points de $U$ convergeant vers $y$, et une suite $\p_n$ de sous-algèbres paraboliques de $\l$ convergeant vers $\p$ (vues comme points dans la grassmannienne de $\l$) telles que
$$\textrm{pour tout } n , D_{\p_n}(y_n, \epsilon) \subset U$$
$$\textrm{et} \quad D_\p(y,\epsilon) \subset \partial U.$$
\end{lemme}

\begin{proof}
Puisque l'action de $\Gamma$ sur $U$ est cocompacte, fixons $F$ un compact de $U$ tel que $\Gamma \cdot F$ recouvre $U$, et soit $\epsilon>0$ tel que $\exp(B_\epsilon) F$ est relativement compact dans $U$.
Soit $y$ un point du bord de $U$, $y_n$ une suite de points de $U$ convergeant vers $y$, et $x_n \in F$, $\gamma_n \in \Gamma$ tels que $\gamma_n \cdot x_n = y_n$.

Par hypothèse sur $\epsilon$, $\exp(B_\epsilon)x_n \subset U$; et comme $U$ est stable par l'action de $\Gamma$, $\gamma_n \cdot \exp(B_\epsilon)x_n \subset U$. Montrons que $\gamma_n \cdot \exp(B_\epsilon)x_n$ contient un disque parabolique de rayon $\epsilon$ centré en $y_n$.

Notons $\gamma_n = (\gamma_{1,n}, \gamma_{2,n})$. On vérifie aisément que

$$\gamma_n \cdot \left(\exp(B_\epsilon)x_n \right) = \exp \left( \Ad_{\gamma_{1,n}} B_\epsilon \right) y_n.$$
\'Ecrivons maintenant la décomposition de Cartan de $\gamma_{1,n}$ :
$$\gamma_{1,n} = k_n \exp(t_n a) k'_n,$$
avec $t_n\geq 0$, $k_n,k'_n \in K$.

Tout d'abord l'action de $\Ad_{k'_n}$ stabilise $B_\epsilon$. Donc 
$$\Ad_{\gamma_{1,n}} B_\epsilon = \Ad_{k_n} \Ad_{\exp(t_n a)} B_\epsilon.$$

Notons comme précédemment $\p_0 = \m \oplus \R a \oplus \n_+$. $\p_0$ est la somme des espaces propres de $\ad_a$ associés aux valeurs propres positives. Par conséquent, $\Ad_{\exp(t_n a)}$ stabilise $\p_0$, et pour tout $u\in \p_0$, 
$$\norm{\Ad_{\exp(t_n a)} u } \geq \norm{u}.$$

On a donc

$$\Ad_{\exp(t_n a)} B_\epsilon \supset \Ad_{\exp(t_n a)} (B_\epsilon\cap \p_0) \supset B_\epsilon\cap \p_0.$$

Enfin, $\Ad_{k_n}(B_\epsilon\cap \p_0) = B_\epsilon \cap \Ad_{k_n} \p_0$, car $\Ad_{k_n}$ préserve la norme que nous avons mise sur $\l$.

En définitive, si l'on note $\p_n = \Ad_{k_n} \p_0$, on a bien
$$D_{\p_n}(y_n, \epsilon) \subset \gamma_n \cdot (\exp(B_\epsilon) x_n) \subset U.$$

Comme tous les $\p_n$ sont conjugués par des éléments de $K$, on peut, quitte à extraire, supposer que $\p_n$ converge dans la grassmannienne de $\l$ vers une sous-algèbre parabolique $\p$.
Enfin, tels que nous les avons construits, tous les disques paraboliques $D_{\p_n}(y_n, \epsilon)$ sont dans $\gamma_n \cdot \exp(B_\epsilon) F$. D'après la proposition \ref{DgnBord}, on a donc $D_{\p}(y, \epsilon) \subset \partial U$.
\end{proof}

Nous avons vu que par tout point du bord passe un disque parabolique. Nous allons ensuite prouver que ce disque est unique et en déduire que l'ouvert $U$ est feuilleté par des sous-espaces paraboliques parallèles. Rappelons d'abord quelques propriétés des sous-groupes et sous-algèbres paraboliques.

\subsubsection{Sous-algèbres paraboliques et sous-groupes paraboliques}

Soit $\p$ une sous-algèbre parabolique de $\l$. Notons $P$ le stabilisateur de $\p$ sous l'action adjointe. Rappelons que $\p$ est l'algèbre de Lie tangente à $P$. Commençons par établir le lemme suivant :

\begin{lemme}
Si $\l$ est de rang $1$ et sans facteur isomorphe à $\sl(2,\R)$, alors $P$ est connexe.
\end{lemme}

\begin{proof}
Soit $P_0$ la composante connexe de l'élément neutre dans $P$. Alors $P_0$ est un sous groupe distingué de $P$, et la projection $\pi : L/P_0 \to L/P$ est un revêtement galoisien de groupe de Galois $P/P_0$. Or il est connu que, lorsque $L$ est de rang 1, la variété $L/P$ est simplement connexe, sauf si $L$ est, à revêtement près, $\SL(2,\R)$. En effet, $L/P$ est le bord de l'espace symétrique $L/K$. Il est par conséquent homéomorphe à une sphère, de dimension $\geq 2$ sauf lorsque $\l = \sl(2,\R)$. Par conséquent, le groupe $P/P_0$ est trivial, et $P$ est donc connexe.
\end{proof}

Comme $L$ est de rang 1, le groupe de Weyl de $L$ n'a que deux éléments. Soit $w$ l'unique élément non trivial du groupe de Weyl.
On a la fameuse décomposition de Bruhat (\cite{Knapp96}):
$$L = P \sqcup PwP.$$

De cette décomposition nous déduisons deux choses.
Tout d'abord :
\begin{proposition}
L'action adjointe est $2$-transitive sur les sous-algèbres paraboliques.
\end{proposition}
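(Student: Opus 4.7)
The plan is to identify the space of parabolic subalgebras of $\l$ with the homogeneous space $L/P$, where $P$ is the parabolic subgroup introduced above, and then to read off 2-transitivity directly from the Bruhat decomposition $L = P \sqcup PwP$ that has just been recalled. Since all parabolic subalgebras are conjugate to $\p_0$ under $\Ad_L$ (being minimal, in rank $1$, hence mutually conjugate), the orbit map $gP \longmapsto \Ad_g(\p_0)$ gives a well-defined $L$-equivariant bijection between $L/P$ and the set of parabolic subalgebras, provided one checks that $P$ is its own normalizer in $L$. This last point is standard (an element normalizing $\p_0$ must in particular normalize its nilradical $\n_+$, hence lie in $P$), and under this identification the adjoint action of $L$ on parabolic subalgebras becomes the action of $L$ on $L/P$ by left translation.

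Next, I would rephrase 2-transitivity as a statement about $P$-orbits: the adjoint action is 2-transitive on parabolic subalgebras if and only if, for any two parabolic subalgebras $\p$ and $\p'$ both distinct from $\p_0$, there exists $p \in P$ with $\Ad_p \p = \p'$. Equivalently, $P$ must act transitively by left translation on $L/P \setminus \{eP\}$. But the Bruhat decomposition gives precisely
\[
L/P \setminus \{eP\} \;=\; (L \setminus P)/P \;=\; PwP/P,
\]
and $PwP/P$ is by definition the single $P$-orbit of $wP$ under left multiplication. This gives the desired transitivity.

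Combining the two steps: for arbitrary distinct pairs $(\p_1, \p_2)$ and $(\p_1', \p_2')$ of parabolic subalgebras, pick first $g \in L$ with $\Ad_g \p_1 = \p_1'$ (using ordinary transitivity), and then $p$ in the stabilizer of $\p_1'$ with $\Ad_p(\Ad_g \p_2) = \p_2'$ (using the Bruhat step above); the product $pg$ does the job. The main thing to be careful about is really just the identification $L/P \simeq \{\text{parabolic subalgebras}\}$, i.e.\ the equality $N_L(\p_0) = P$; everything else is an immediate consequence of the Bruhat decomposition already stated, so I do not expect any genuine obstacle here.
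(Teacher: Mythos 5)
Votre démonstration est correcte et suit essentiellement la même voie que celle de l'article : identification de l'espace des sous-algèbres paraboliques à $L/P$, puis utilisation de la décomposition de Bruhat $L = P \sqcup PwP$ pour voir que le stabilisateur d'un point agit transitivement sur le complémentaire de ce point, ce qui équivaut à la $2$-transitivité. La seule différence est cosmétique : vous vérifiez explicitement $N_L(\p_0) = P$, ce qui est automatique ici puisque $P$ est défini dans l'article comme le stabilisateur de $\p_0$ sous l'action adjointe.
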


\begin{corollaire}
Deux sous-groupes paraboliques qui ne sont pas identiques sont transverses en l'élément neutre.
\end{corollaire}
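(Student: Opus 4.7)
Ma preuve est un corollaire formel de la 2-transitivit� de l'action adjointe sur les sous-alg�bres paraboliques, d�duite de la d�composition de Bruhat $L = P \sqcup PwP$. Soient $P_1$ et $P_2$ deux sous-groupes paraboliques distincts. Puisque chaque sous-groupe parabolique est le stabilisateur, pour l'action adjointe, de sa propre alg�bre de Lie tangente, l'hypoth�se $P_1 \neq P_2$ entra�ne que les alg�bres de Lie $\p_1$ et $\p_2$ sont distinctes. La transversalit� en l'�l�ment neutre s'exprime alors simplement par $\p_1 + \p_2 = \l$.

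Par 2-transitivit�, il suffit d'�tablir cette �galit� pour un couple particulier de sous-alg�bres paraboliques distinctes; je prendrai $\p_1 = \p_0 = \m \oplus \R a \oplus \n_+$ et $\p_2 = \Ad_w(\p_0)$, o� $w$ d�signe un rel�vement dans $L$ de l'unique �l�ment non trivial du groupe de Weyl. L'�tape cl� est de d�crire explicitement $\Ad_w(\p_0)$ � l'aide de la d�composition $\l = \m \oplus \R a \oplus \n_+ \oplus \n_-$. Comme $w$ normalise la sous-alg�bre de Cartan $\R a$ et y agit par $-\Id$ (caract�risation du g�n�rateur du groupe de Weyl en rang 1), on a $\Ad_w(a) = -a$, d'o� $\Ad_w \circ \ad_a \circ \Ad_w^{-1} = -\ad_a$; les espaces propres de $\ad_a$ associ�s � des valeurs propres oppos�es sont donc �chang�s, ce qui donne $\Ad_w(\n_+) = \n_-$. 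Par ailleurs, $\Ad_w$ pr�serve $\R a$ et la sous-alg�bre $\m = \k \cap \com(a)$ (puisque $\com(-a) = \com(a)$). Il en r�sulte que $\Ad_w(\p_0) = \m \oplus \R a \oplus \n_-$, puis
$$\p_0 + \Ad_w(\p_0) = \m \oplus \R a \oplus \n_+ \oplus \n_- = \l,$$
ce qui est exactement la transversalit� voulue.

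Il n'y a donc pas de v�ritable obstacle dans cette preuve: le cas g�n�ral est ramen� par 2-transitivit� au seul cas standard $(\p_0, \Ad_w\p_0)$, et la v�rification pour ce cas se r�duit � l'action bien connue du repr�sentant du groupe de Weyl sur la d�composition d'espaces radiciels. Ce corollaire est en quelque sorte la traduction g�om�trique du fait que la grosse cellule de Bruhat $PwP$ est ouverte dense dans $L$, ou encore que $\dim(\p_1\cap\p_2) = \dim \m + 1$ est la dimension minimale possible pour l'intersection de deux sous-alg�bres paraboliques distinctes.
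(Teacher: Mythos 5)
Votre preuve est correcte et suit essentiellement la m\^eme d\'emarche que celle du texte : r\'eduction par $2$-transitivit\'e au couple standard de sous-alg\`ebres paraboliques oppos\'ees, dont la somme est $\l$. Vous explicitez seulement un d\'etail laiss\'e implicite dans le texte, \`a savoir le calcul $\Ad_w(\p_0) = \m \oplus \R a \oplus \n_-$ via l'action de l'\'el\'ement non trivial du groupe de Weyl sur $\R a$.
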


\begin{proof}
L'espace des sous-algèbres paraboliques s'identifie à $L/P$, où $P$ est un sous-groupe parabolique. Montrons donc que l'action de $L$ sur $L/P$ est $2$-transitive.
Soient $xP$ et $yP$ deux points distincts de $L/P$. La multiplication à gauche par $x^{-1}$ les envoie respectivement sur $P$ et $x^{-1}yP$. Comme $xP$ et $yP$ sont distincts, $x^{-1}y \notin P$. D'après la décomposition de Bruhat, il existe donc $p\in P$ tel que $x^{-1}yP = pwP$. La multiplication à gauche par $p^{-1}$ fixe $P$ et envoie $x^{-1}yP$ sur $wP$. Nous avons donc montré que deux points distincts de $L/P$ peuvent toujours être envoyés par l'action à gauche de $L$ sur $P$ et $wP$.

Prouvons maintenant le corollaire.
Soient $P_1$ et $P_2$ deux sous-groupes paraboliques distincts, d'algèbres de Lie respectives $\p_1$ et $\p_2$. D'après la proposition précédente, $\p_1$ et $\p_2$ peuvent être envoyées par l'action adjointe d'un élément $g$ sur $\m \oplus \R a \oplus \n_+$ et $\m \oplus \R a \oplus \n_-$, dont la somme engendre $\l$. On a donc également $\p_1 + \p_2 = \l$, et $P_1$ et $P_2$ sont donc transverses.
\end{proof}

Ensuite :
\begin{lemme}
Soient $P$ et $P'$ deux sous-groupes paraboliques, et $a$ et $a'$ deux éléments de $L$. Alors $aP$ et $a'P'$ s'intersectent, sauf si $P = P'$ ou si $aPa^{-1} = a'P'a'^{-1}$. Autrement dit, deux sous-espaces paraboliques s'intersectent transversalement sauf si l'un est l'image de l'autre par une multiplication à gauche ou à droite.
\end{lemme}

On dira que deux sous-espaces paraboliques sont parallèles à gauche (resp. à droite) lorsque l'un est l'image de l'autre part multiplication à gauche (resp. à droite).

\begin{proof}
Si $P = P'$, il est clair que $aP$ et $a'P'$ ne s'intersectent pas, sauf si $a^{-1}a' \in P$, auquel cas $aP = a'P'$. De même, si $aPa^{-1} = a'P'a'^{-1}$, alors $aP$ et $a'P'$ ne s'intersectent pas, sauf si $a a'^{-1} \in aPa^{-1}$, ce qui implique aussi que $aP = a'P'$.

Réciproquement, supposons $P \neq P'$. Alors, par 2-transitivité de l'action adjointe, on peut supposer que $P' = w^{-1}Pw$. La décomposition de Bruhat donne alors
$$L = w^{-1}P \sqcup P'P.$$
Considérons l'élément $a'^{-1}a$. Si $a'^{-1}a = w^{-1}p$ pour un certain $p \in P$, on obtient $aPa^{-1} = a'P'a'^{-1}$. Sinon, il existe $p \in P$ et $p'\in P'$ tels que $a'^{-1}a = p' p^{-1}$. On a donc $ap = a'p'$ et $aP$ et $a'P'$ s'intersectent donc.
\end{proof}

De ces remarques, et d'après le lemme \ref{DisqueParabolique}, on peut déduire le résultat suivant :
\begin{proposition}
Il existe un unique sous-groupe parabolique $P$ tel que
\begin{itemize}
\item soit $Py \subset \partial U$ pour tout $y\in \partial U$,
\item soit $yP \subset \partial U$ pour tout $y\in \partial U$.
\end{itemize}
\end{proposition}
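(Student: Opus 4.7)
\textbf{The plan is} to first establish pointwise uniqueness of the parabolic subalgebra at each boundary point, then propagate the resulting parabolic subspaces globally into $\partial U$, and finally extract a single parabolic subgroup $P$ from a rigidity argument on triples of points.

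First, I would prove that at each $y \in \partial U$, the parabolic subalgebra $\p(y)$ provided by Lemma \ref{DisqueParabolique} is unique. If two distinct subalgebras $\p$ and $\p'$ both satisfied $D_\p(y,\epsilon), D_{\p'}(y,\epsilon) \subset \partial U$, then the right cosets $\exp(\p)y$ and $\exp(\p')y$, rewritten as left cosets through $y$, would be forced by the intersection lemma to meet transversally at $y$; but then the disks $D_{\p_n}(y_n,\epsilon) \subset U$ approximating $D_\p(y,\epsilon)$ (given by Lemma \ref{DisqueParabolique}) would eventually intersect $D_{\p'}(y,\epsilon) \subset \partial U$, contradicting $U \cap \partial U = \emptyset$. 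Uniqueness gives a continuous map $y \mapsto \p(y)$, and a standard open-closed argument on the connected orbit $P(y)y$, based on uniqueness at each of its points, extends the containment to $P(y)y \subset \partial U$ with $\p$ constant along the orbit.

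Next, for any two boundary points $y_1, y_2$ lying in distinct parabolic subspaces, those subspaces are disjoint (a common point would, by uniqueness at that point, force them to coincide). The intersection lemma thus yields a local dichotomy: either $P(y_1) = P(y_2)$ (\emph{left-parallel case}) or $y_1^{-1} P(y_1) y_1 = y_2^{-1} P(y_2) y_2$ (\emph{right-parallel case}). The heart of the proof is to show that these two types cannot coexist. Suppose $y_1, y_2, y_3$ lie in pairwise distinct subspaces with $(y_1,y_2)$ left-parallel (of common parabolic $P$) and $(y_1,y_3)$ right-parallel (so with $Q = y_1^{-1}Py_1 = y_3^{-1}P(y_3)y_3$ and thus $P(y_3) = y_3 Q y_3^{-1}$). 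Whichever of the two alternatives applies to $(y_2, y_3)$, a short computation using the self-normalizing property $N_L(P) = P$ of parabolic subgroups in rank one forces either $y_1 \in Py_2$ or $y_1 \in Py_3$, contradicting the distinctness of the three subspaces.

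Hence a single type holds throughout $\partial U$. In the left-parallel case $P(y) \equiv P$ is constant and every orbit $Py$ with $y \in \partial U$ lies in $\partial U$; in the right-parallel case $y^{-1}P(y)y \equiv P$ is constant and every orbit $yP$ lies in $\partial U$. Uniqueness of $P$ is immediate from the pointwise uniqueness of $\p(y)$ established at the start. The main obstacle is the three-point rigidity argument of the previous paragraph: it is precisely there that the rank-one structure (and the self-normalization of parabolics) is indispensable in converting a local dichotomy into a global one.
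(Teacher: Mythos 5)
Your proposal follows essentially the same route as the paper: pointwise uniqueness of the parabolic subalgebra at each boundary point via transversality of distinct parabolics against the approximating disks in $U$, an open--closed propagation along the connected orbit to get the whole parabolic subspace into $\partial U$, pairwise parallelism of the boundary subspaces, and finally the three-subspace argument (resting on $N_L(P)=P$) to rule out the coexistence of left- and right-parallel pairs. The only differences are cosmetic (the paper phrases the last step as ``the first and third subspaces are then not parallel, hence intersect transversally, absurd'', which is the same self-normalization computation you sketch).
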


\begin{proof}

Montrons d'abord que pour tout $y \in \partial U$, la sous-algèbre parabolique $\p$ pour laquelle il existe $\eta >0$ telle que $D_\p(y,\eta) \subset U$ est unique.

Fixons $y\in \partial U$. Supposons qu'il existe $\eta >0$ et $\p$ sous-algèbre de Lie parabolique telle que $D_\p(y,\eta) \subset U$. Considérons d'autre part une suite $y_n$ convergeant vers $y$ et $\p_n$ convergeant vers $\p'$ telle que $D_{\p_n}(y_n, \epsilon) \subset U$ et $D_{\p'}(y, \epsilon)\subset U$.
Supposons que $\p' \neq \p$. Alors $\p$ et $\p'$ sont transverses, et $D_{\p}(y, \eta)$ et $D_{\p'}(y, \epsilon)$ le sont aussi. Par conséquent, pour $n$ assez grand; $D_{\p_n}(y_n, \epsilon)$ intersecte $D_{\p}(y, \eta)$, ce qui est absurde puisque $D_{\p}(y, \eta) \subset \partial U$.
Par conséquent, $\p = \p'$, et on en déduit l'unicité de $\p$.\\

Soit $y \in \partial U$ et $\p$ l'unique sous-algèbre de Lie parabolique telle que $D_{\p}(y,\epsilon) \subset \partial U$. Montrons que $\partial U$ contient en réalité tout $Py$. Pour cela, considérons l'ensemble $A$ des $z\in Py$ tels que $D_\p(z, \epsilon) \subset \partial U$, et montrons que cet ensemble est ouvert et fermé dans $Py$. Comme $Py$ est connexe et que $A$ contient $y$, cela impliquera le résultat.

Le fait que $A$ est fermé vient simplement du fait que $\partial U$ est fermé. Montrons que $A$ est ouvert. Soit $z \in A$ et $z' \in D_\p(z, \epsilon/2) \subset \partial U$. Alors il existe $\epsilon' > 0$ tel que $D_\p(z', \epsilon') \subset \partial U$. Mais d'autre part, il existe $\p'$ telle que $D_{\p'}(z', \epsilon) \subset \partial U$. Par unicité de $\p$, on a $\p' = \p$ et $z' \in A$. L'ensemble $A$ est donc ouvert et fermé, d'où $A=Py$ et $Py \subset \partial U$.

Résumons. Pour tout point $y \in \partial U$, il existe un unique sous-espace parabolique contenant $y$ et inclus dans $\partial U$. Soient deux sous-espaces paraboliques inclus dans le bord de $U$. S'ils ne sont pas parallèles, ils s'intersectent transversalement en un point $y\in \partial U$, ce qui contredit l'unicité du sous-espace parabolique passant par $y$. Donc tous les sous-espaces paraboliques contenus dans le bord de $U$ sont deux à deux parallèles.

Soient maintenant trois sous-espaces paraboliques distincts inclus dans $\partial U$. Si le premier et le deuxième sont parallèles à gauche, et le deuxième et le troisième sont parallèles à droite, alors le premier et le troisième ne sont pas parallèles, ce qui est absurde. On en déduit donc que les sous-espaces paraboliques contenus dans le bord de $U$ sont soit tous parallèles à gauche, soit tous parallèles à droite.
\end{proof}

\begin{proposition}
Quitte à intervertir l'action à gauche et l'action à droite, supposons que tous les sous-espaces paraboliques contenus dans le bord de $U$ sont les translations à gauche d'un même sous-groupe parabolique $P$. Alors :

\begin{itemize}
\item[(i)] $\Gamma \subset L \times P$.
\item[(ii)] $U$ est stable par multiplication à droite par $P$.
\end{itemize} 
\end{proposition}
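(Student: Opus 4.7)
The plan is to exploit the equivariance of the $\Gamma$-action together with the fact, established in the preceding proposition, that $\partial U$ admits a \emph{unique} decomposition into left cosets of the single parabolic subgroup $P$.

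For part (i), I would start from the observation that $\Gamma$ preserves $U$, hence also preserves $\partial U$. Computing the action of an arbitrary element $(g_1, g_2) \in \Gamma$ on a left coset $xP \subset \partial U$,
\[
(g_1, g_2) \cdot (xP) \;=\; g_1 x P g_2^{-1} \;=\; (g_1 x g_2^{-1}) \cdot (g_2 P g_2^{-1}),
\]
shows that the image is a left coset of the conjugate parabolic subgroup $g_2 P g_2^{-1}$. Since the image must again be a union of left cosets of $P$ itself, and since the left coset structure determines the underlying subgroup, one is forced to have $g_2 P g_2^{-1} = P$, i.e., $g_2 \in N_L(P)$. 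Parabolic subgroups of semi-simple Lie groups being self-normalizing, $N_L(P) = P$, and therefore $g_2 \in P$, yielding $\Gamma \subset L \times P$.

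For part (ii), I would fix $x \in U$ and examine the full left coset $xP$. Assuming $P$ is connected --- which holds whenever $\l$ has no $\sl(2,\R)$-factor, the case $L = \SL(2,\R)$ being already covered by Klingler's theorem --- the set $xP$ is connected. If one had $xP \not\subset U$, then by connectedness $xP$ would have to meet $\partial U$ at some point $y$; applying the preceding proposition, $yP \subset \partial U$, but $y \in xP$ gives $yP = xP$, yielding the contradiction $x \in xP \subset \partial U$. Hence $xP \subset U$, which is exactly the stability of $U$ under right multiplication by $P$.

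I do not expect any serious obstacle in this argument: the heavy lifting has already been carried out in the construction of the parabolic foliation of $\partial U$ and in the structural lemmas of the previous subsection (connectedness and self-normalization of $P$). The one delicate point is the $\SL(2,\R)$ exception, where $P$ need not be connected; in that case one runs the same argument with the identity component $P^{0}$, which is enough since Klingler's theorem already handles the completeness of those quotients directly.
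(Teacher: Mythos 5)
Your proof is correct and follows essentially the same route as the paper: part (i) by letting $(\gamma_1,\gamma_2)$ act on a boundary coset, observing the image is a left coset of $\gamma_2 P\gamma_2^{-1}$, and concluding $\gamma_2\in P$ from the uniqueness of the boundary parabolic subspaces and self-normalization; part (ii) by connectedness of $xP$ and the fact that any boundary point drags its whole coset into $\partial U$. Your explicit handling of the $\SL(2,\R)$ case (where $P$ need not be connected) is a small precision the paper leaves implicit.
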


\begin{proof}
Soit $y \in \partial U$ et $\gamma  = (\gamma_1, \gamma_2) \in \Gamma$. On a $yP \subset \partial U$. Comme le bord de $U$ est stable sous l'action de $\gamma$, 
$$\gamma .(yP) = (\gamma .y)  \gamma_2 P \gamma_2^{-1} \subset \partial U.$$
Or l'unique sous-espace parabolique inclus dans $\partial U$ et contenant $\gamma .y$ est $(\gamma .y) P$. Donc $\gamma_2 P \gamma_2^{-1} = P$ et $\gamma_2 \in P$, ce qui prouve (i).

Soit maintenant $x\in U$. Supposons que $xP \nsubseteq U$. Alors il existe $p \in P$ tel que $xp \in \partial U$. Mais alors $xpP \subset \partial U$ et en particulier $x \in \partial U$, ce qui est absurde. Donc $xP \subset U$.
\end{proof}

\subsubsection{Fin de la preuve}

Nous venons de prouver que le domaine $U$ fibre au dessus d'un ouvert $\Omega$ de $L/P$. De plus, l'action de $\Gamma$ sur $U$ préserve cette fibration, et l'action induite sur la base $\Omega$ est donnée par l'action à gauche de $\rho_1(\Gamma)$. Il nous reste à prouver que $\Omega = L/P$.

Prouvons pour commencer que $\Omega$ n'est pas $L/P$ privé d'un point. Supposons par l'absurde qu'il existe $x \in L$ tel que $U = L - xP$. Alors l'action de $\Gamma$ préserve $xP$, et on a donc $\Gamma \subset xP x^{-1}\times P$. D'après la décomposition de Bruhat, l'action de $xP x^{-1}\times P$ sur $U = L - xP$ est transitive.

La contradiction proviendra donc du résultat suivant :\\

\begin{Thm}
Soit $L$ un groupe de Lie de rang 1, $P$ un sous-groupe parabolique de $L$ et $x \in L$. Alors aucune variété compacte ne peut posséder une $(xP x^{-1}\times P, L - xP)$-structure. (Le facteur $xP x^{-1}$ agissant sur $L-xP$ par multiplication à gauche et le facteur $P$ par multiplication à droite.)
\end{Thm}

\begin{proof}
Quitte à conjuguer par un élément de $L \times \{\1\}$, on peut supposer que $x\notin P$. Par conséquent, $\1 \in L - xP$.
Posons $P' = xPx^{-1}$. Comme l'action adjointe de $L$ est $2$-transitive sur les sous-groupes paraboliques, il existe une décomposition de $\l = T_{\1} L$ sous la forme
$$\l = \m \oplus \R a \oplus \n_+ \oplus \n_-$$
où $a$ est un élément tel que $\ad_a$ est diagonalisable, $\m$ le centralisateur de $a$ dans $\k$, $\n_+$ la somme des espaces propres de $a$ pour les valeurs propres $>0$, $\n_-$ la somme des espaces propres de $a$ pour les valeurs propres $<0$, et telle que $\m \oplus \R a \oplus \n_+$ (resp. $\m \oplus \R a \oplus \n_-$) est l'algèbre de Lie tangente à $P'$ (resp. $P$).

Le stabilisateur de $\1$ dans $P'\times P$ est le plongement diagonal de $P \cap P'$. Son algèbre de Lie tangente est $\m \oplus \R a$, et le vecteur $a$ est invariant par l'action de ce stabilisateur. Par conséquent, $a$ s'étend en un champ de vecteur $X_a$ défini sur $L-xP$ et préservé par $P'\times P$. Ce champ de vecteur descendra donc sur toute variété munie d'une $(P'\times P, L-xP)$-structure.

D'autre part, $L-xP$ possède une forme volume $\omega$ invariante sous l'action de $P'\times P$ (restriction d'une forme volume bi-invariante sur $L$). Nous allons prouver que le champ de vecteur $X_a$ dilate le volume $\omega$.

Par homogénéité, il existe une constante $\alpha$ telle que $X_a . \omega = \alpha \omega$, et il suffit de calculer $\alpha$ au point $\1$. Considérons $e_1, \ldots, e_n$ une base de diagonalisation de $\l$ sous l'action de $\ad_a$. Soit $\lambda_i$ la valeur propre associée à $e_i$.
Soient $E_i$ les champs invariants à droite prolongeant les $e_i$.
Le volume $\omega$ étant invariant à droite, il s'écrit (à une constante près) $E_1^* \wedge \ldots \wedge E_n^*$. Comme il est également invariant à gauche, il est préservé par les champs $E_i$.

Posons $X_a = \sum \alpha_i E_i$, où les $\alpha_i$ sont des fonctions de $L-xP$ dans $\R$.
On a alors
$$X_a . \omega = \mathrm{div}(X_a) \omega$$
où 
$$\mathrm{div}(X_a) =\sum_i \d \alpha_i (E_i).$$

Calculons cette divergence au point $\1$. Soit d'abord $i$ tel que $\lambda_i \leq 0$. Alors $\exp(te_i) \subset P$.
On a donc $X_a(\exp(te_i)) = R_{\exp(te_i)} a$. Comme le champ $X_a$ est invariant à droite le long de $P$, les fonctions $\alpha_j$ sont invariantes le long de $P$, et en particulier
$$\d \alpha_i (e_i) =0$$ lorsque $\lambda_i \leq 0$.

Soit maintenant $i$ tel que $\lambda_i>0$. Alors $\exp(te_i) \subset P'$. Donc 
\begin{eqnarray*}
X_a(\exp(te_i)) & = & L_{\exp(te_i)} a \\
\ & = & R_{\exp(te_i)} \Ad_{\exp(te_i)} a \\
\ & = & R_{\exp(te_i)} (a + \lambda_i t e_i),
\end{eqnarray*}
d'où
$$\alpha_i(\exp(te_i)) = \lambda_i t.$$
En prenant la dérivée en $t = 0$, on obtient finalement :
$\d \alpha_i(e_i) = \lambda_i$.

En conclusion, $\mathrm{div}(X_a)$ est la somme des valeurs propres strictement positives de $a$. Cette divergence est donc non nulle.
Or, si $M$ est une variété compacte, $\omega$ une forme volume sur $M$ et $X$ un champ de vecteur, de flot $\Phi_t$, on a 
$$\int_M \Phi_t^* \omega = \int_M \omega$$
pour tout $t$.
En particulier, le champ $X$ ne peut pas dilater uniformément le volume $\omega$. Par conséquent, Il n'existe pas de variété compacte localement modelée sur $(P' \times P, L-xP)$.

\end{proof}

Nous avons donc prouvé que $\Omega$ ne peut pas être $L/P$ privé d'un seul point. Montrons maintenant que $\Omega$ ne peut pas non plus être $L/P$ privé de $2$ points ou plus.

Commençons par la remarque suivante :

\begin{proposition}
Soit $\Delta$ un sous-groupe de $L$ préservant un ouvert $\Omega$ de $L/P$ dont le complémentaire contient au moins deux points. Alors l'action de $\Delta$ sur $\Omega$ est propre.
\end{proposition}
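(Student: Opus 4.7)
The plan is to leverage the \emph{convergence group} property of the action of $L$ on the flag variety $L/P$ in rank~$1$. Specifically, for any sequence $\gamma_n$ leaving every compact subset of $L$, one can extract a subsequence and produce two points $\xi^+, \xi^- \in L/P$ such that $\gamma_n \cdot x$ tends to $\xi^+$ uniformly on compact subsets of $L/P \setminus \{\xi^-\}$. This follows from the Cartan decomposition $\gamma_n = k_n \exp(t_n a) k'_n$ (with $t_n \to +\infty$ since $\gamma_n \to \infty$) together with the fact that $\exp(ta)$, acting by left translation on $L/P$, has exactly two fixed points by Bruhat: the class $[1]$ of $P$ itself, which is attracting as $t \to +\infty$, and the class $[w]$ of the nontrivial Weyl element, which is repelling. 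Extracting subsequences so that $k_n \to k$ and $k'_n \to k'$ in $K$, one obtains $\xi^+ = k \cdot [1]$ and $\xi^- = (k')^{-1} \cdot [w]$. Applied to $\gamma_n^{-1}$, a short manipulation using a representative $w \in K$ of the Weyl element shows that the attractor and repellor get swapped.

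Granting this dynamical input, I would argue by contradiction. Assume the action of $\Delta$ on $\Omega$ is not proper: there exist a compact $K \subset \Omega$ and a sequence $\gamma_n \in \Delta$ going to infinity in $L$ such that $\gamma_n \cdot K \cap K \neq \emptyset$ for all $n$. Extract a subsequence producing the attractor $\xi^+$ and repellor $\xi^-$. By hypothesis, the closed $\Delta$-invariant set $(L/P) \setminus \Omega$ contains two distinct points $\eta_1, \eta_2$; at least one of them, say $\eta_1$, is different from $\xi^-$, so its $\gamma_n$-orbit, which lies entirely in $(L/P) \setminus \Omega$, converges to $\xi^+$, and closedness forces $\xi^+ \notin \Omega$. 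The same argument applied to $\gamma_n^{-1}$ yields $\xi^- \notin \Omega$.

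Now $\xi^- \notin \Omega$ implies $\xi^- \notin K$, hence $K \subset L/P \setminus \{\xi^-\}$ and the convergence $\gamma_n \to \xi^+$ is uniform on $K$. Picking $x_n \in K$ with $\gamma_n \cdot x_n \in K$, one gets $\gamma_n \cdot x_n \to \xi^+$, whence $\xi^+ \in K$ by compactness, contradicting $\xi^+ \notin \Omega$. This rules out the failure of properness.

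The main obstacle is establishing the convergence group property cleanly, and in particular verifying that the attractor/repellor pair gets swapped when passing from $\gamma_n$ to $\gamma_n^{-1}$. Both points reduce to the explicit two-point dynamics of the one-parameter group $\exp(ta)$ on $L/P$, together with the Cartan decomposition of $L$ already recalled in the paper; so the real content is dynamical rather than combinatorial, and the rest of the proof is a purely formal extraction using the hypothesis $|(L/P) \setminus \Omega| \geq 2$ to locate both $\xi^+$ and $\xi^-$ outside $\Omega$.
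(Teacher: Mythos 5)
Your argument is correct and follows essentially the same route as the paper: the paper invokes the North--South dynamics of divergent sequences on $L/P$ as a black box (lemme \ref{N-S}, cité de \cite{Frances07}), uses the hypothesis $|\Omega^c|\geq 2$ to place both poles in the closed invariant set $\Omega^c$, and then contradicts non-properness exactly as you do. The only difference is cosmetic — you sketch the derivation of the convergence-group property from the Cartan decomposition rather than citing it, and you phrase the pole-location step via ``at least one of the two points avoids each pole'' instead of the paper's two-case split — but the content is identical.
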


Pour prouver cette proposition, rappelons que l'action de $L$ sur $L/P$ possède une dynamique ``Nord-Sud'', dans le sens suivant:

\begin{lemme}[\cite{Frances07}]\label{N-S}
Soit $\gamma_n$ une suite de $L$ sortant de tout compact. Alors il existe deux \emph{pôles} $x_-$ et $x_+ \in L/P$ (éventuellement égaux), une suite extraite $\gamma_{n_k}$, une suite de voisinages $U_k$ de $x_-$ et une suite de voisinages $V_k$ de $x_+$ tels que
\begin{itemize}
\item Pour tout $k$, $\gamma_{n_k}$ envoie le complémentaire de $U_k$ dans $V_k$ et $\gamma_{n_k}^{-1}$ envoie le complémentaire de $V_k$ dans $U_k$
\item $\bigcap_k U_k = \{x_-\}$ et $\bigcap_k V_k = \{x_+\}$.
\end{itemize}
\end{lemme}

Ce résultat est une propriété classique des espaces $L/P$ où $L$ est de rang $1$. Le point $x_-$ (resp. $x_+$) est appelé \emph{pôle répulseur} (resp. \emph{pôle attracteur}) de la suite $\gamma_{n_k}$.

Considérons maintenant un sous-groupe $\Delta$ de $L$ préservant un domaine $\Omega$ de $L/P$.

\begin{proposition} \label{PolesBord}
Si le complémentaire de $\Omega$ contient au moins deux points, alors toute suite $\gamma_n$ de $\Delta$ possédant une dynamique Nord-Sud a ses pôles dans $\Omega^c$.
\end{proposition}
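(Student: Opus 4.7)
Le plan est d'exploiter directement le fait que $\Omega^c$ est un ferm� de $L/P$ invariant sous l'action de $\Delta$ et contenant au moins deux points, combin� � la convergence fournie par le Lemme \ref{N-S}.

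Soit $(\gamma_n)$ une suite de $\Delta$ sortant de tout compact et poss�dant, au sens de ce lemme, une dynamique Nord-Sud de p�le attracteur $x_+$ et de p�le r�pulseur $x_-$. Quitte � extraire, on dispose des voisinages $(U_k)$ de $x_-$ et $(V_k)$ de $x_+$ v�rifiant les conditions du lemme. Commen�ons par montrer que $x_+ \in \Omega^c$. Puisque $|\Omega^c|\geq 2$, on peut choisir $y \in \Omega^c$ distinct de $x_-$. Comme $\bigcap_k U_k = \{x_-\}$, pour $k$ assez grand $y$ est en dehors de $U_k$; la propri�t� Nord-Sud donne alors $\gamma_{n_k}\cdot y \in V_k$. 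Or $\Delta$ pr�serve $\Omega$, donc aussi son compl�mentaire $\Omega^c$, qui est ferm�; ainsi $\gamma_{n_k}\cdot y \in \Omega^c$ pour tout $k$. En faisant tendre $k$ vers l'infini et en utilisant $\bigcap_k V_k = \{x_+\}$, la fermeture de $\Omega^c$ fournit $x_+ \in \Omega^c$.

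Pour traiter le p�le $x_-$, j'appliquerais exactement le m�me raisonnement � la suite $(\gamma_{n_k}^{-1})$, qui appartient encore � $\Delta$ et dont la dynamique Nord-Sud �change les r�les des deux p�les: son p�le attracteur devient $x_-$ et son p�le r�pulseur devient $x_+$. En choisissant cette fois un point $y' \in \Omega^c$ distinct de $x_+$ (ce qui est � nouveau possible gr�ce � $|\Omega^c|\geq 2$), on obtient de la m�me fa�on $x_- \in \Omega^c$.

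Il n'y a ici pas de v�ritable obstacle: le seul point d�licat est de pouvoir choisir, dans $\Omega^c$, un point distinct du p�le r�pulseur (respectivement attracteur), ce qui est pr�cis�ment ce que garantit l'hypoth�se $|\Omega^c|\geq 2$. Sans cette hypoth�se, la preuve �chouerait exactement dans le cas o� $\Omega^c$ se r�duirait � un unique p�le, ce qui correspond bien au cas d�j� �cart� par la discussion pr�c�dente sur les ouverts dont le compl�mentaire est r�duit � un point. La combinaison des deux arguments ci-dessus donne la proposition.
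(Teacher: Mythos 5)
Votre preuve est correcte et suit essentiellement la m\^eme route que celle du texte : on pousse un point de $\Omega^c$ par la dynamique Nord--Sud et on utilise que $\Omega^c$ est ferm\'e et $\Delta$-invariant. La seule diff\'erence, purement organisationnelle, est que le texte distingue les cas $\Omega^c \subset \{x_-,x_+\}$ et \og il existe un troisi\`eme point \fg, tandis que vous traitez chaque p\^ole s\'epar\'ement en choisissant un t\'emoin de $\Omega^c$ distinct de l'autre p\^ole, ce qui \'evite la disjonction de cas.
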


Montrons tout d'abord que cette proposition implique que l'action de $\Delta$ sur $\Omega$ est propre. Supposons qu'elle ne le soit pas. Il existe alors un compact $K \subset \Omega$ et une suite $\gamma_n$ sortant de tout compact de $L$ telle que $\gamma_n . K$ intersecte $K$ pour tout $n$. Considérons $n_k$ une extraction, et $x_-$, $x_+$, $U_k$, $V_k$ comme dans le lemme \ref{N-S}.

D'après la proposition \ref{PolesBord}, $x_-$ et $x_+$ de sont pas dans $\Omega$. En particulier, $K$ ne contient aucun des deux pôles. Par conséquent, Pour $k$ assez grand, $K$ est inclus dans le complémentaire de $U_k$ et donc $\gamma_{n_k} \cdot K \subset V_k$. Mais pour $k$ assez grand, $K$ est aussi inclus dans le complémentaire de $V_k$. Donc pour $k$ assez grand, $\gamma_{n_k} \cdot K$ n'intersecte pas $K$. Cela contredit l'hypothèse. Donc l'action est propre.

\begin{proof}[Preuve de la proposition \ref{PolesBord}]

Soit $\gamma_n$ une suite de $\Delta$ possédant une dynamique Nord-Sud, $x_-$ et $x_+$ ses pôles. Alors :
\begin{itemize}
\item Soit $\Omega^c$ est inclus dans $\{x_-, x_+\}$; dans ce cas, comme $\Omega^c$ contient au moins deux points, $\Omega^c = \{x_-, x_+\}$
\item Soit il existe $x \in \Omega^c$ différent de $x_-$ et $x_+$. La dynamique Nord-Sud nous dit alors que $\gamma_n\cdot x$ converge vers $x_+$ et $\gamma_n^{-1} \cdot x$ converge vers $x_-$. Comme $\Omega^c$ est un fermé $\Delta$-invariant, $x_-$ et $x_+$ appartiennent à $\Omega^c$.
\end{itemize}
\end{proof}

Revenons à notre ouvert $U$ de $L$ qui fibre au dessus de $\Omega \subset L/P$, et sur lequel $\Gamma \subset L\times P$ agit proprement, discrètement et cocompactement. Nous avons prouvé que le complémentaire de $\Omega$ contient au moins deux points. Par conséquent, l'action de $\rho_1(\Gamma)$ sur $\Omega$ est propre. Nous allons montrer que $\Gamma$ ne peut pas être discret dans $L\times P$.

$P$ se décompose sous la forme $M \exp(\R a) N_+$, où $N_+$ est le sous-groupe dérivé de $P$, $a$ un élément de $\p$ dont l'action adjointe sur $\n_+$ est diagonalisable à valeurs propres strictement positives, et $M$ le centralisateur de $a$ dans $K$. Remarquons que le sous-groupe de $P$ engendré par $\exp(a)$ et par un élément $b \in N_+ \backslash \1$ n'est pas discret. En effet, l'action adjointe de $\exp(a)$ dilate $\n_+$, et par conséquent, $\exp(-na) b \exp(na)$ converge vers $\1$ sans être stationnaire. L'idée de la preuve est d'utiliser la cocompacité de l'action de $\Gamma$ sur $U$ pour trouver des éléments de $\Gamma$ ``assez proches'' de $(\1,\exp(na))$ et de $(\1,b)$, et conclure que $\Gamma$ non plus n'est pas discret.

Munissons $L$ d'une métrique invariante à droite.
Soit $x \in L$ tel que $xP$ est inclus dans $U$. En particulier, $x \exp(na) \in U$ pour tout $n$. Par cocompacité de l'action de $\Gamma$, il existe donc une suite $\gamma_n=(\alpha_n, \beta_n) \in \Gamma$ telle que $y_n = \alpha_n x \exp(na) \beta_n^{-1}$ reste dans un domaine fondamental compact de $U$. En projetant dans $L/P$, on observe que $\alpha_n x P$ reste dans un compact de $\Omega$. Comme l'action de $\rho_1(\Gamma)$ sur $\Omega$ est propre, on en déduit que la suite $\alpha_n$ est bornée dans $L$. Posons $c_n = y_n^{-1} \alpha_n x$. La suite $\beta_n = c_n \exp(na)$ reste donc à distance bornée de $\exp(na)$. 

Supposons qu'il existe un élément $\eta =(\zeta, \xi)$ du groupe dérivé de $\Gamma$, tel que $\xi$ soit différent de l'élément neutre. $\xi$ est alors inclus dans le groupe dérivé $N_+$, qui est bien sûr distingué dans $P$. Par conséquent, la suite $c_n^{-1} \xi c_n$ reste dans un domaine borné de $N_+$, et la suite ${\beta_n}^{-1} \xi \beta_n= \Ad_{\exp(-na)} (c_n^{-1} \xi c_n)$ converge vers $\1$ sans être stationnaire. D'autre part, la suite $\alpha_n$ est bornée, et la suite $\alpha_n^{-1} \zeta \alpha_n$ l'est donc aussi. On obtient finalement que la suite $\gamma_n^{-1} \eta \gamma_n$ est bornée dans $L\times L$ et prend une infinité de valeurs. $\Gamma$ n'est donc pas discret.

Il reste à voir qu'on peut trouver $(\zeta, \xi) \in [\Gamma, \Gamma]$ avec $\xi \neq \1$. Dans le cas contraire, on a $\rho_2(\Gamma)$ abélien. L'adhérence de Zariski de $\rho_2(\Gamma)$ dans $P$ est alors un sous-groupe abélien fermé $A$. Ce sous-groupe est soit inclus dans $N+$, soit dans un sous-groupe conjugué à $M \exp(\R a)$. Dans tous les cas, $P/A$ n'est pas compact. On peut donc considérer une suite $y_n$ dans $P$ dont la distance à $A$ tend vers l'infini. Par cocompacité de l'action de $\Gamma$, il existe $(\alpha'_n, \beta'_n) \in \Gamma$ tel que $\alpha'_n x y_n {\beta'}_n^{-1}$ reste dans un domaine compact de $U$. Comme précédemment on en déduit que $\alpha'_n$ est bornée, puis que $\beta'_n$ reste à distance bornée de $y_n$, ce qui est absurde car $\beta'_n \in A$ et que $y_n$ s'éloigne indéfiniment de $A$.

On obtient finalement que le groupe $\Gamma$ n'est pas discret dans $L\times P$, ce qui est absurde, puisqu'il agit discrètement sur $U$. Ceci achève de prouver que le complémentaire de $\Omega$ ne contient pas plus de deux points. Par conséquent, $\Omega = L/P$, $U=L$, et le théorème est prouvé.

\bibliographystyle{smfplain}
\bibliography{mabiblio}
\end{document}